\newtheorem{thm}{Theorem}[section]
\newtheorem{prop}[thm]{Proposition} 
\newtheorem{ass}[thm]{Assumption}
\newtheorem{remark}{Remark}
\newcommand{\R}{\mathbb R}
\title{Sparse control in microscopic and mean-field leader-follower models}
\date{}
\author{
Melanie Harms\footnote{Lehrstuhl für Algebra und Zahlentheorie, RWTH Aachen University, Aachen, Germany; \texttt{melanie.harms@rwth-aachen.de}, \texttt{eva.zerz@math.rwth-aachen.de}}, 
$\;$
Michael Herty\footnote{Institut für Geometrie und Praktische Mathematik, RWTH Aachen University, Aachen, Germany; \texttt{herty@igpm.rwth-aachen.de}}, 
$\;$
Chiara Segala\footnote{Faculty of Informatics, Università della Svizzera italiana -- USI, Lugano, Switzerland; \texttt{chiara.segala@usi.ch}}, 
$\;$
Eva Zerz$^*$
}
\begin{document}
\maketitle
\begin{abstract}
This work investigates the decay properties of Lyapunov functions in leader-follower systems seen as a sparse control framework. Starting with a microscopic representation, we establish conditions under which the total Lyapunov function, encompassing both leaders and followers, exhibits exponential decay. The analysis is extended to a hybrid setting combining a mean-field description for followers and a microscopic model for leaders. We identify sufficient conditions on control gain and interaction strengths that guarantee stabilization of the linear system towards a target state. The results highlight the influence of sparse control and interaction parameters in achieving coordinated behavior in multi-agent systems.
\end{abstract}

\paragraph{Keywords:} {Sparse control, Lyapunov decay, leader-follower systems, mean-field approximation, multi-agent dynamics, stabilization}

\paragraph{AMS Classification:} { 93C10 \textbullet \ 34D05 \textbullet \ 49K20 \textbullet \ 37N40 \textbullet \ 82C22}

\section{Introduction}\label{sec:intro}

The study of particle systems subject to control has become an active field of research. For comprehensive discussions, we refer to \cite{MR3642940, MR3969953, MR3308728}, which include recent books on the topic, along with the references within them. The theory of open and closed-loop control applied to large-scale (microscopic) systems, as well as their mean-field limits, has been extensively explored. Key references in this area include \cite{MR4030515, MR4642632, MR3679340, MR4226993, MR4028474}. 

In addition, various approaches have been proposed to develop new control strategies, particularly in the context of interacting particle systems and the effects of sparsity in control. For instance, instantaneous control applied over a single time horizon leads to explicit feedback laws \cite{MR4046175, MR3351435}. These strategies are sparse in the sense that not the entire time horizon is incorporated into the control. A decentralized control approach, as discussed in \cite{MR4028780}, introduces sparsity by limiting interactions among particles. 

In the so-called moment-driven control, sparsity is introduced by focusing on a reduced system that governs, for example, the mean or variance of the ensemble, rather than solving the full system \cite{MR4399019}. Sparse control strategies are also closely linked to dynamics based on leader-follower interactions. Typically, such interactions are approximated using mean-field models, where the leader's influence is treated as a control strategy aimed at steering the system towards a desired state. Several examples of this approach have been discussed in the literature, e.g., \cite{MR4797665, MR4409801, MR3945372, AlbiFerrareseSegala21, MR2552289, MR3542027, MR3195343, MR3268062}, with mean-field limits rigorously studied in \cite{MR3268059}.

It is important to note, however, that in most cases, the interaction is prescribed. In contrast, the present work derives a feedback control strategy for the leaders based on a Lyapunov function approach. Additionally, a related study on sparse control \cite{MR3195343} employs $\ell_1$-regularization to guide a self-organizing system. In that work, it was observed that a few agents are sufficient to significantly influence the overall dynamics. These findings are also demonstrated in the current work, albeit through a different approach to deriving the feedback control.

In particular, in this work we consider first a microscopic model where each agent interacts with others based on a symmetric interaction kernel and is subject to an external control input. The goal is to design control mechanisms that ensure convergence to a consensus state while minimizing the number of controlled agents. 
In order to do that, we investigate the stabilization properties of a leader-follower systems seen as sparse control strategies.
Additionally, to address large-scale systems, we extend our analysis to a hybrid setting, where leaders are modelled microscopically, and followers are described via a mean-field formulation. We establish conditions under which the Lyapunov function of the linear system exhibits exponential decay, ensuring stabilization. We then validate our theoretical results through numerical simulations, demonstrating the effectiveness of the proposed control strategies.

The paper is organized as follows. In Section~\ref{sec:micro_nonlin}, we introduce the microscopic model and discuss its control properties. In particular, we present the linearized system and we derive conditions for stabilization as well as appropriate linear state feedback using spectral techniques. We analyze also the correspondent mean-field model. Section~\ref{sec:leader_follower} focuses on the leader-follower system. Finally, Section~\ref{sec:lyap} presents a Lyapunov stability analysis, followed by the numerical tests in Section~\ref{sec:numtests}, and conclusions are drawn in Section~\ref{sec:conclusion}.

\section{Multi-agent system with control action}\label{sec:micro_nonlin}

We consider the control of high-dimensional non-linear systems of interacting particles, which can model the dynamics of multi-agent systems exhibiting self-organization. In particular, the state of each agent $x_i:=x_i(t)\in\mathbb{R}$ evolves through interactions with other agents and an external control signal $u:=u(t)\in\mathbb{R}$. The system dynamics are governed by the equation
\begin{equation}\label{nonlin_dynamics}
	\begin{split}
	&\dot{x}_i = \frac{1}{N}\sum_{j=1}^N P(x_i,x_j)(x_j - x_i) + b_i u,
	\\
	&x_i(0)=x_{i}^0,\qquad
	i=1,\ldots,N,
		\end{split}
\end{equation}
where $P:\R\times\R\to\R$ is a symmetric interaction kernel governing the influence of agent $j$ on agent $i$, and 
$b_i\in\R$ is a parameter through which the external control $u$ acts specifically on agent $i$. 
%$ b_i u$ is the external control applied to the system, acting specifically on agent $i$. 
The system describes the evolution of $N$ agents undergoing binary exchanges of information, weighted by the interaction kernel.

In the context of this work, we are particularly interested in driving the system to a \textit{consensus equilibrium}, where all agents share the same state. A consensus state $x_c \in \mathbb{R}^N$ is defined as
\[
x_c = c \,\mathbbm{1}_{N\times 1},
\]
where $c \in \R$ is a constant and $\mathbbm{1}_{N\times 1}$ is the all-ones vector of size $N\times 1$, meaning that all agents converge to the same value: $x_1 = x_2 = \ldots = x_N = c$. This equilibrium is the desired outcome of the control design.

\begin{ass}\label{ass:symmetry}
	To model the interaction between agents, we assume the kernel $P(x, y)$ to be symmetric, i.e.,
	\[
	P(x, y) = P(y, x) \quad \text{for all} \ x, y \in \R.
	\]
	This symmetry reflects the idea that the influence between two agents is mutual.
\end{ass}
\begin{remark}
    Under Assumption \ref{ass:symmetry}, if the system is uncontrolled (i.e., $u(t) \equiv 0$), the dynamics conserves the mean state over time.
    In particular, if  the system naturally drive the agents toward consensus in the absence of control, then the consensus point is the mean of the initial conditions:
    \[
    c = \frac{1}{N} \sum_{i=1}^N x_i^0,
    \]
    i.e. the consensus is determined by the initial distribution of agent states. However, our objective is to introduce an external control $u(t)$ to drive the system to a specific consensus, potentially different from the natural outcome. 
\end{remark}
%before:
%\begin{remark}
%    Under Assumption \ref{ass:symmetry}, if the system is uncontrolled (i.e., $u(t) \equiv 0$), the dynamics naturally drive the agents toward consensus, where the consensus point is the mean of the initial conditions:
%    \[
%    c = \frac{1}{N} \sum_{i=1}^N x_i^0.
%    \]
%    This implies that the system conserves the mean state over time in the absence of control, in particular, the system will reach a consensus determined by the initial distribution of agent states. However, our objective is to introduce an external control $u(t)$ to drive the system to a specific consensus, potentially different from the natural outcome. 
%\end{remark}
The control design we consider focuses on \textit{sparse control}, where the goal is to affect the system using as few control inputs as possible, thus minimizing the control effort while still achieving consensus. This is particularly important in large systems, where it may be impractical or costly to control every agent. 

The main challenges in addressing this control problem arise from the high dimensionality of the system and the non-linearity introduced by the interaction kernel $P(x_i, x_j)$. As the number of agents $N$ increases, direct numerical solutions become computationally expensive. Moreover, designing sparse controls that are both effective and scalable adds further complexity to the problem.

We start proposing an approach for solving this sparse control problem 
%by leveraging control theoretic methods, particularly 
using eigenvalue analysis. By analyzing the spectral properties of the system, we identify control strategies that drive the system to consensus. Furthermore, we employ mean-field approximations to reduce the complexity of the system when $N$ becomes large, providing a tractable solution for large-scale multi-agent systems.

\subsection{Eigenvalue analysis of the linearized system}\label{sec:linearization}

To facilitate the synthesis of a feedback control law for the non-linear dynamics introduced in \eqref{nonlin_dynamics}, we begin by linearizing the system around an equilibrium point, which corresponds to a consensus state of the agents.

We define the vector-valued function $f(x): \R^{N} \rightarrow \R^{N}$, which describes the interaction dynamics of the agents, as follows:
\begin{equation*}
	f_i(x) = \frac{1}{N}\sum_{j=1}^N P(x_i, x_j)(x_j - x_i),\qquad i = 1, \ldots, N.
\end{equation*}
The function $f_i(x)$ captures the influence of all other agents on the state of agent $i$.

We linearize the dynamics around a consensus point $x_c$. %, which is an arbitrary equilibrium for the non-linear system \eqref{nonlin_dynamics}. %(already written above)
At this equilibrium, all agents share the same state and the non-linear interaction term satisfies $f(x_c) = 0_{N\times 1}$.

The communication kernel $P(x_i, x_j)$ evaluated at the consensus point is a constant value $\bar{p}\in\R$, i.e.,
\[
P(c, c) = \bar{p},
\]
that represents the influence between agents at equilibrium. %This assumption allows us to treat the interaction term as linear around the consensus state. (no assumption, right?)

The first-order Taylor expansion of $f(x)$ around the equilibrium point $x_c$ yields the linearized interaction term:
\[
f(x) \approx A (x - x_c) = A x,
\]
where $A = \frac{\partial f}{\partial x}\Big|_{x = x_c} \in \R^{N \times N}$ is the Laplacian matrix of the system, which captures the interaction structure of the agents in the linear regime. The matrix $A$ has the following form:
\begin{align} \label{AB}
	A = -\bar p I_N + \frac{\bar p}{N} \mathbbm{1}_{N \times N}, \qquad
	A_{ij} =
	\begin{cases} 
		&\frac{\bar{p}(1-N)}{N}, \qquad i = j, \\
		&\frac{\bar{p}}{N}, \qquad\qquad i \neq j,
	\end{cases}
\end{align}
where $I_N$ is the identity matrix of size $N \times N$, and $\mathbbm{1}_{N \times N}$ is the all-ones matrix, reflecting the uniform influence between all agents at the consensus state. This structure of $A$ ensures that $A(x - x_c) = A x$ since $x_c$ represents a consensus point, where all agents share the same value.

Therefore, the linearized system around the equilibrium point is governed by the following equation:
\begin{equation}\label{lin_dynamics}
	\dot{x} = A x, \qquad \dot{x}_i = \frac{1}{N}\sum_{j=1}^N \bar{p}(x_j - x_i).
\end{equation}
In this linearized regime, the agents evolve under the influence of a uniform interaction strength $\bar{p}$, and the dynamics are described by the Laplacian matrix $A$, which is commonly used to model consensus problems in multi-agent systems.

To classify the equilibrium point \(x_c\) of the linearized system \eqref{lin_dynamics}, we analyze the eigenvalues of the matrix \(A\). The eigenvalue structure is crucial for understanding the stability of the system under control.

\begin{ass}
	We assume that the bounded value \(P(c,c) = \bar{p}\) is strictly positive.
\end{ass}
The matrix \(A\) possesses the following eigenvalue characteristics:
\begin{itemize}
	\item One eigenvalue \(\lambda_1 = 0\) with the corresponding eigenvector \(v_1 = \mathbbm{1}_{N\times 1}\),
	\item \(N-1\) eigenvalues \(\lambda_i = -\bar{p}\) with eigenvectors \(v_i = e_1 - e_i\), for \(i = 2, \ldots, N\).
\end{itemize}
Since \(\lambda_1\) is not strictly negative, we cannot guarantee that the equilibrium point \(x_c\) is asymptotically stable. To achieve asymptotic stability, our objective is to shift \(\lambda_1\) to a negative value. To do this, we consider the controlled linearized system of \eqref{nonlin_dynamics} given by:
\begin{equation}\label{eq:lin_contr_sys}
	\dot{x} = A x + B u,
\end{equation}
where \(A \in \mathbb{R}^{N \times N}\) is defined in \eqref{AB} and \(B=(b_1,\ldots,b_N)^T \in \mathbb{R}^N\).

This system is not fully controllable, but it is stabilizable if and only if the matrix $(A,B)\in\R^{(N+1)\times N}$ has rank $N$. The latter is equivalent to $\sum_{j = 1}^{N} b_j \neq 0$, which motivates the next assumption.
\begin{ass}\label{sum:bi}
	We assume that the sum of the components of \(B\) is non-zero:
	\[
	\sum_{j = 1}^{N} b_j \neq 0.
	\]
\end{ass}
To stabilize the system \eqref{eq:lin_contr_sys}, we employ a linear state feedback control law, which only depends on the mean state $\frac{1}{N}\sum_{i=1}^Nx_i$ and conserves the equilibrium $x_c$ of the uncontrolled system. All possible feedback of this type have the following form
\begin{equation}\label{eq:u_feedback}
	u = F (x - x_c) = \frac{k}{N} \sum_{j = 1}^{N} (x_j - c) = k \left( \frac{1}{N} \sum_{j = 1}^{N} x_j - c \right),
\end{equation}
where
\[
F = \frac{k}{N} \mathbbm{1}_{1 \times N} \in \mathbb{R}^{1 \times N}
\]
and $k\in\R$ is arbitrary.
Substituting this control law into the system, we have:
\begin{equation*}
	\dot{x} = A x + B F (x - x_c) = A x_c + (A + B F) (x - x_c).
\end{equation*}
Next, we seek the eigenvalues of the matrix \(A + BF\), which can be expressed as:
\begin{align*}
	A + BF = -\bar{p} I_N + \frac{\bar{p}}{N} \mathbbm{1}_{N \times N} + B\frac{k}{N} \mathbbm{1}_{1 \times N} ,
	\\[1ex]
	(A + BF)_{ij} =
	\begin{cases} 
		\frac{\bar{p}(1-N)}{N} + \frac{k}{N} b_i, & \quad i = j, \\
		\frac{\bar{p}}{N} + \frac{k}{N} b_i, & \quad i \neq j.
	\end{cases}
\end{align*}
The modified system \(A + BF\) now has the following eigenvalue structure, which can be checked by a simple computation:
\begin{itemize}
	\item One eigenvalue \(\lambda_1 = \frac{k}{N} \sum_{j = 1}^{N} b_j\) with the eigenvector \(v_1 = \mathbbm{1}_{N\times 1} + \frac{k}{\bar{p}} B\), 
	\item \(N-1\) eigenvalues \(\lambda_i = -\bar{p}\) with eigenvectors \(v_i = e_1 - e_i\) for \(i = 2, \ldots, N\).
\end{itemize}

Assumption \ref{sum:bi} allows us to appropriately choose the parameter \(k\) (either positive or negative) based on the sign of \(\sum_{j = 1}^{N} b_j\neq0\) such that \(\lambda_1<0\). Thus, we can ensure that the controlled linear dynamics become asymptotically stable. Consequently, the controlled nonlinear dynamics, when incorporating the linear feedback law \eqref{eq:u_feedback}, i.e.,
\begin{equation}\label{eq:dyn_contr}
    \dot{x}_i = \frac{1}{N}\sum_{j=1}^N P(x_i,x_j)(x_j - x_i) + b_i\frac{k}{N} \sum_{j=1}^N(x_j-c),
\end{equation}
will also exhibit asymptotic stability around the consensus state $x_c$ (see for instance \cite{MR1640001}).

We are particularly interested in investigating sparsity and mean-field approximations in the context of control strategies for multi-agent systems. In light of this, we examine the following two hypothetical cases for the matrix \(B\):
\begin{align*}
	B &= e_j \quad \rightarrow \quad \lambda = -\bar{p}; \ \frac{k}{N}, \qquad (Bu)_i = k \left( \frac{1}{N} \sum_{l = 1}^{N} x_l - c \right) \ \text{for} \ i=j.
	\\
	\\
	B &= \mathbbm{1}_{N\times 1} \quad \rightarrow \quad \lambda = -\bar{p}; \ k, \qquad (Bu)_i = k \left( \frac{1}{N} \sum_{l = 1}^{N} x_l - c \right) \ \text{for all} \ i.
\end{align*}
In the first case, where \(B = e_j\), we encounter sparsity since control is applied to only one agent \(j\) at a time. However, when considering the mean-field limit as \(N\) approaches infinity, the eigenvalue \(\frac{k}{N}\) tends to zero, making it difficult to draw conclusions about stability in this context. 

In the second case, where \(B = \mathbbm{1}_{N \times 1}\), we find a well-defined mean-field limit as the average of the particles transitions to an integral, allowing us to interpret it in terms of mean densities. Here, the eigenvalues remain manageable, and we can adjust \(k\) to ensure that they are all negative, which facilitates stability. However, this case lacks sparsity since control is applied uniformly to all agents, which is impractical for large systems.

Given these insights, our goal is to identify an intermediate approach that achieves both effective control and sparsity, allowing us to control only a percentage of the agents while maintaining the desired stability properties, we investigate this approach in Section \ref{sec:leader_follower}.

In summary, through the eigenvalue analysis of the controlled linear system, we have established the conditions under which stability can be achieved, setting the stage for effective control strategies.

\subsection{Mean-field limit}\label{sec:mean_field}

In order to assess the validity of our results in the scenario where a large number of agents is present (i.e., \(N \gg 1\)), we explore our approach in the context of the {\em mean-field limit}. The mean-field theory provides a powerful framework for describing the collective behavior of a large ensemble of interacting particles by approximating the dynamics of the system with a continuous distribution rather than discrete individuals. This approach becomes particularly useful when the number of agents increases, as it allows for the reduction of complexity in modeling and analysis.

To formalize this, we introduce the empirical probability distribution of particles associated with the system defined by \eqref{nonlin_dynamics}. The empirical measure is defined as follows:
\begin{equation*}
    \mu^N(t,x) = \frac{1}{N} \sum_{j=1}^N \delta (x - x_j(t)),
\end{equation*}
where \(\delta(\cdot)\) represents the Dirac measure. This formulation captures the density of agents at position \(x\) at time \(t\) by summing contributions from all agents \(j\) in the system, normalized by the total number of agents \(N\).

We assume sufficient regularity conditions on the interaction kernel \(P(x, y)\) to ensure that the particles remain confined within a fixed compact domain. This is crucial as it guarantees that the density function behaves well and converges appropriately as \(N\) approaches infinity.

Starting from system \eqref{nonlin_dynamics} with control equals to \eqref{eq:u_feedback} applied to every agent (that is $B = \mathbbm{1}_{N\times 1}$), we derive the mean-field limit.
As \(N \to \infty\), the dynamics of the empirical measure converge to a nonlinear partial differential equation governing the evolution of the density \(\mu(t,x)\). Specifically, we obtain:
\begin{equation}\label{eq:mf_linear}
	\begin{split}
		\partial_t \mu(t,x) &= - \nabla_x \cdot \left[\mu(t,x) \Bigl(\mathcal{P}[\mu](t,x) + k(m_1[\mu](t)-c) \Bigr)\right],
		\\
		\mu(0,x) &= \mu^0(x),
	\end{split}
\end{equation}
where $\mathcal{P}$ is a non-local operator given by
\begin{equation*}
    \mathcal{P}[\mu](t,x) = \int_\mathbb{R} P(x,y) (y-x) \mu(t,y) dy,
\end{equation*}
and \(m_1[\mu]\) is the first moment (mean) of the probability density \(\mu(t,x)\), defined as:
\begin{align*}
	m_1[\mu](t) = \int_{\mathbb{R}} x \mu(t,x) \, dx.
\end{align*}
Equation \eqref{eq:mf_linear} highlights the interplay between local interactions and the collective dynamics of the ensemble, and we gain insights into how the properties of the original multi-agent system can be characterized in terms of macroscopic quantities, which may simplify both analysis and control design in practical applications.

\section{Leader-follower model} \label{sec:leader_follower}

In our exploration of multi-agent systems, we seek to balance the control exerted at the micro level over individual agents with the broader dynamics observed in a mean-field framework. To achieve this, we propose a leader-follower model that enables selective control of a subset of agents, allowing for a more nuanced interaction among the agents while ensuring effective collective behavior.

We define our system with a total of $N$ agents, comprising $N^F$ followers and $N^L$ leaders, such that $N^F + N^L = N$ with $N^F \gg N^L$. Importantly, the followers and leaders are distinct groups, which we denote by the sets $I^F$ for followers and $J^L$ for leaders, satisfying $I^F \cap J^L = \emptyset$.

To capture the interactions among agents, we assign weights to each group of agents, given by $\omega^F = \frac{\rho^F}{N^F}$ for followers and $\omega^L = \frac{\rho^L}{N^L}$, where the total fraction of agents must satisfy $\rho^F + \rho^L = 1$ with $\rho^F, \rho^L > 0$. This weighting allows us to scale interactions appropriately, especially when the number of followers is much larger than the number of leaders.

Such dynamics can be expressed as:
\begin{equation}\label{eq:leader_follower_micro}
    \begin{split}
	& \forall \  i \in I^F:
	\\
	&\dot{x}_i = \omega^F \sum_{j \in I^F} P(x_i, x_j)(x_j - x_i) + \omega^L \sum_{j \in J^L} P(x_i, x_j)(x_j - x_i),
	\\ \ \\
	&\forall \  i \in J^L:
	\\
	&\dot{x}_i = \omega^F \sum_{j \in I^F} P(x_i, x_j)(x_j - x_i) + \omega^L \sum_{j \in J^L} P(x_i, x_j)(x_j - x_i)
	\\
	& \qquad + k\left(\omega^F \sum_{j \in I^F} (x_j -c) + \omega^L \sum_{j \in J^L} (x_j - c)\right).
     \end{split}
\end{equation}
The first equation captures the influence of both the followers and the leaders on the movement of each follower. In the second one, the additional term involving the control parameter $k$ allows the leaders to adjust their behavior based on the average positions of both followers and other leaders, aiming for a target value $c$.

\begin{remark}
        Choosing $\omega^F = \omega^L = \frac{1}{N}$ and 
        \begin{equation*}
            b_i =
            \begin{cases}
                1 \quad \text{if } \ i\in J^L
                \\
                0 \quad \text{if } \ i\in I^F
            \end{cases}
        \end{equation*}
        we recover the original dynamics \eqref{nonlin_dynamics} with feedback control $u$ as in \eqref{eq:u_feedback}.
\end{remark}
To analyze the collective dynamics of the system, we shift to a mean-field perspective. We define the density of leaders as 
\begin{equation*}
	\eta^{N^L}(t,y) = \omega^L \sum_{j \in J^L} \delta(y - x_j(t)),
\end{equation*}
and the density of followers as 
\begin{equation*}
	\nu^{N^F}(t,x) = \omega^F \sum_{j \in I^F} \delta(x - x_j(t)).
\end{equation*}
As $N^F$ approaches infinity, the density of followers $\nu(t,x)$ emerges as a continuous distribution that captures the collective behavior of the followers.

In the mean-field limit, the evolution of the followers density and leaders is described by the equations:
\begin{equation}\label{eq:leader_follower_mf}
\begin{split}
	& \partial_t \nu(t,x) = - \nabla_x \cdot \left[ \nu(t,x) \left( \int_{\mathbb{R}} P(x,y)(y-x)\nu(t,y) \, dy \right) \right]
	\\
	& \hspace{1.8cm} - \nabla_x \cdot \left[ \nu(t,x) \left( \int_{\mathbb{R}} P(x,y)(y-x)\eta^{N^L}(t,y) \, dy \right) \right],
	\\ \ \\
	&\forall \  i \in J^L:
	\\
	& \dot{x}_i = \int_{\mathbb{R}} P(x_i,y)(y-x_i)\nu(t,y) \, dy + \omega^L \sum_{j \in J^L} P(x_i, x_j)(x_j - x_i)
	\\
	& \qquad + k\left( m_1[\nu] - \rho^F c + \omega^L \sum_{j \in J^L} (x_j - c)\right),
\end{split}
\end{equation}
where $m_1[\nu]$ is the average position of the followers, defined as 
\begin{equation*}
	m_1[\nu](t) = \int_{\mathbb{R}} x \, \nu(t,x) \, dx.
\end{equation*}
This framework integrates both the micro-level interactions of agents and the mean-field approximations, offering a structured approach to controlling a selective subset of agents while effectively managing the dynamics of the overall system.

\section{Lyapunov stability analysis}\label{sec:lyap}

In this section, we investigate the stability of our system using Lyapunov's direct method. The central idea is to prove that a Lyapunov function decreases over time, indicating that the system converges to a desired state or equilibrium.

\subsection{Full control Lyapunov decay}

We will focus first on the case where control is applied to every agent, hence we consider the original controlled dynamics \eqref{nonlin_dynamics} with $b_i=1$ for all $i\in\{1,\ldots,N\}$ and with control $u$ equal to \eqref{eq:u_feedback} choosing \( k < 0.\)

We define the Lyapunov function as follows:
\begin{equation*}
	\mathcal{L}(t) = \frac{1}{N} \sum_{i=1}^N (x_i(t) - c)^2,
\end{equation*}
where $c$ is the target state that the agents are attempting to reach. The intuition behind this choice is that $\mathcal{L}(t)$ measures the average squared deviation of the agents' positions from the desired target $c$.

For the rest of the article, we consider this additional assumption.

\begin{ass}\label{ass:P_non_negative}
	We assume the interaction kernel $P$ to be nonnegative and to be strictly positive across all pairs of agents $x\neq y$.
\end{ass}

\begin{prop}[Lyapunov function decay: Microscopic full control]\label{prop:micro_full_control}
If Assumption \ref{ass:P_non_negative} holds and the control \eqref{eq:u_feedback} with control parameter $k<0$ is applied uniformly to all agents, then the Lyapunov function $\mathcal{L}(t)$ satisfies the following inequality:
\begin{equation*}
	\frac{d}{dt}{\mathcal{L}}(t) < 0
\end{equation*}
for all $t$ with $x(t)\neq x_c$.
\end{prop}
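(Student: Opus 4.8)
The plan is to differentiate $\mathcal{L}$ along the trajectories of the full-control dynamics \eqref{eq:dyn_contr} with $b_i=1$ for all $i$, and to show that $\dot{\mathcal{L}}$ decomposes into an interaction contribution and a control contribution, each of which is manifestly nonpositive, with at least one of them strictly negative whenever $x(t)\neq x_c$.

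First I would compute
\[
\frac{d}{dt}\mathcal{L}(t) = \frac{2}{N}\sum_{i=1}^N (x_i-c)\,\dot{x}_i
= \underbrace{\frac{2}{N^2}\sum_{i,j=1}^N (x_i-c)\,P(x_i,x_j)\,(x_j-x_i)}_{=:T_1}
+ \underbrace{\frac{2k}{N^2}\sum_{i,j=1}^N (x_i-c)(x_j-c)}_{=:T_2}.
\]
The control term is immediate: the double sum in $T_2$ factorizes as $\bigl(\sum_{i=1}^N(x_i-c)\bigr)^2\ge 0$, hence $T_2\le 0$ because $k<0$, with equality precisely when $\sum_{i=1}^N(x_i-c)=0$.

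For the interaction term $T_1$ I would use the usual symmetrization: relabel $i\leftrightarrow j$ in the sum and invoke the symmetry $P(x_i,x_j)=P(x_j,x_i)$ from Assumption \ref{ass:symmetry}, so that $T_1$ equals the average of itself and its relabeled copy. A short computation using $(x_i-c)(x_j-x_i)+(x_j-c)(x_i-x_j)=-(x_i-x_j)^2$ then gives
\[
T_1 = -\frac{1}{N^2}\sum_{i,j=1}^N P(x_i,x_j)\,(x_i-x_j)^2 \le 0,
\]
where nonnegativity of $P$ (Assumption \ref{ass:P_non_negative}) yields the sign, and strict positivity of $P$ off the diagonal shows that $T_1=0$ if and only if $x_i=x_j$ for all $i,j$, i.e.\ $x(t)$ is a consensus state.

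Finally I would combine the two bounds. Since $T_1\le 0$ and $T_2\le 0$, we always have $\frac{d}{dt}\mathcal{L}(t)\le 0$; equality in $T_1$ forces all components of $x(t)$ to coincide, and then equality in $T_2$ additionally forces that common value to be $c$, i.e.\ $x(t)=x_c$. Consequently, if $x(t)\neq x_c$ at least one of $T_1,T_2$ is strictly negative while the other is nonpositive, so $\frac{d}{dt}\mathcal{L}(t)<0$, as claimed. I do not expect a real obstacle here; the only point requiring a little care is to check that the degeneracy sets of $T_1$ (all $x_i$ equal) and $T_2$ ($\sum_i(x_i-c)=0$) intersect exactly in $\{x=x_c\}$, which is why strict positivity of $P$ off the diagonal together with $k<0$ suffices and no further assumption on $P$ is needed.
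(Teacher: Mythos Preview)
Your argument is correct and follows essentially the same route as the paper's proof: differentiate $\mathcal{L}$, split into the interaction term and the control term, symmetrize the interaction term via the swap $i\leftrightarrow j$ and the symmetry of $P$ to obtain $-\frac{1}{N^2}\sum_{i,j}P(x_i,x_j)(x_i-x_j)^2$, rewrite the control term as $\frac{2k}{N^2}\bigl(\sum_i(x_i-c)\bigr)^2$, and then analyze the equality cases to conclude strict negativity away from $x_c$. The only cosmetic difference is that you spell out the intersection of the degeneracy sets of $T_1$ and $T_2$ a bit more explicitly than the paper does.
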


\begin{proof}
    To prove this, we begin by differentiating the Lyapunov function with respect to time:
\begin{align*}
	\frac{d}{dt}{\mathcal{L}}(t) &= \frac{2}{N} \sum_{i=1}^N (x_i(t) - c) \dot{x}_i(t).
\end{align*}
Next, we substitute $\dot{x}_i(t)$ from the controlled dynamics \eqref{nonlin_dynamics}, with the control $u$ given by \eqref{eq:u_feedback}:
\begin{align*}
	\frac{d}{dt}{\mathcal{L}}(t) &= \frac{2}{N} \sum_{i=1}^N (x_i - c) \left( \frac{1}{N} \sum_{j=1}^N P(x_i, x_j)(x_j - x_i) + k \left( \frac{1}{N} \sum_{j=1}^N x_j - c \right) \right) \\
	&= \frac{2}{N^2} \sum_{i=1}^N \sum_{j=1}^N (x_i - c) P(x_i, x_j)(x_j - x_i) + \frac{2k}{N^2} \sum_{i=1}^N \sum_{j=1}^N (x_i - c)(x_j - c).
\end{align*}
We now simplify the two terms using the symmetry of $P(x_i, x_j)$ and swapping the indices:
\begin{align*}
	\frac{d}{dt}{\mathcal{L}}(t) &= \frac{1}{N^2} \sum_{i=1}^N  \sum_{j=1}^N (x_i - c) P(x_i, x_j)(x_j - x_i)
 \\
 & \qquad - \frac{1}{N^2} \sum_{i=1}^N  \sum_{j=1}^N (x_j - c) P(x_i, x_j)(x_j - x_i)
 + \frac{2k}{N^2} \Biggl( \sum_{i=1}^N (x_i - c)\Biggr)^2
 \\
 & = -\frac{1}{N^2} \sum_{i=1}^N  \sum_{j=1}^N P(x_i, x_j)(x_j - x_i)^2
+ \frac{2k}{N^2} \Biggl( \sum_{i=1}^N (x_i - c)\Biggr)^2.
\end{align*}
Since $ P(x_i, x_j) \geq 0 $ according to Assumption \ref{ass:P_non_negative}, and since $ (x_j - x_i)^2 \geq 0 $, the first term in the previous calculation is non-positive. It is zero if and only if all agents share the same position, i.e. $x_i=x_j$ for all $i,j\in\{1,\ldots,N\}$.
Furthermore, the second term is non-positive as $k<0$. 
We conclude that:
\begin{equation*}
	\frac{d}{dt}{\mathcal{L}}(t) \leq 0
\end{equation*}
with $\frac{d}{dt}{\mathcal{L}}(t) = 0$ if and only if both terms in the previous calculation are zero, which is equivalent to $x(t)=x_c$. Therefore, we have $\frac{d}{dt}{\mathcal{L}}(t) <0$ for all $t$ with $x(t)\neq x_c$, proving the asymptotic stability of the system under full control.
\end{proof}

We established a control strategy that effectively drives the agents towards the desired equilibrium state $x_c$.

We consider now the mean-field setting, and we aim to establish the decay of a Lyapunov function associated with the evolution of the probability density $\mu(t,x)$, which describes the distribution of agents' positions in the limit as the number of agents tends to infinity.

We define the mean-field Lyapunov function as:

\begin{equation*}
	\mathcal{L}^{\mu}(t) = \int_{\mathbb{R}} |x - c|^2 \, \mu(t,x) \, dx,
\end{equation*}
where $c$ is the target state, and $\mu(t,x)$ is the probability density of the agents at time $t$. 
The dynamics of $\mu(t,x)$ are governed by the non-local PDE \eqref{eq:mf_linear}.
\begin{prop}[Lyapunov function decay: Mean-field full control]\label{prop:mf_full_control}
If Assumption \ref{ass:P_non_negative} holds and the mean-field dynamics \eqref{eq:mf_linear} are controlled via the feedback term $k(m_1[\mu](t)-c)$ with $k<0$, then the mean-field Lyapunov function $\mathcal{L}^{\mu}(t)$ satisfies the following inequality:
\begin{equation*}
    \frac{d}{dt} \mathcal{L}^{\mu}(t) < 0.
\end{equation*}
\end{prop}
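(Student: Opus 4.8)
The plan is to mirror the microscopic computation in Proposition~\ref{prop:micro_full_control}, working directly with the transport PDE \eqref{eq:mf_linear}. First I would differentiate under the integral sign:
\[
\frac{d}{dt}\mathcal{L}^{\mu}(t) = \int_{\mathbb{R}} |x-c|^2 \, \partial_t \mu(t,x)\, dx
= -\int_{\mathbb{R}} |x-c|^2 \, \nabla_x\cdot\!\left[\mu(t,x)\Bigl(\mathcal{P}[\mu](t,x) + k(m_1[\mu](t)-c)\Bigr)\right] dx.
\]
Then I would integrate by parts in $x$, using the assumed compact support / confinement of $\mu$ (stated in Section~\ref{sec:mean_field}) to kill the boundary term, and use $\nabla_x |x-c|^2 = 2(x-c)$. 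This yields
\[
\frac{d}{dt}\mathcal{L}^{\mu}(t) = 2\int_{\mathbb{R}} (x-c)\,\mu(t,x)\,\mathcal{P}[\mu](t,x)\, dx
+ 2k\bigl(m_1[\mu](t)-c\bigr)\int_{\mathbb{R}} (x-c)\,\mu(t,x)\, dx .
\]

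Next I would treat the two terms separately, exactly as in the microscopic proof. For the control term, observe $\int_{\mathbb{R}}(x-c)\mu(t,x)\,dx = m_1[\mu](t)-c$ since $\mu$ is a probability density, so that term equals $2k\,(m_1[\mu](t)-c)^2 \le 0$ because $k<0$. For the interaction term, I would expand $\mathcal{P}[\mu](t,x) = \int_{\mathbb{R}} P(x,y)(y-x)\mu(t,y)\,dy$, write the whole term as a double integral over $(x,y)$, symmetrize by swapping $x\leftrightarrow y$ and invoking Assumption~\ref{ass:symmetry} ($P(x,y)=P(y,x)$), and use the identity $(x-c)(y-x) = -\tfrac12 (y-x)^2 + \tfrac12[(y-c)(y-x) - (x-c)(x-y)]$ type manipulation — concretely, averaging the original and index-swapped versions gives
\[
2\iint (x-c)P(x,y)(y-x)\mu(t,x)\mu(t,y)\,dx\,dy = -\iint P(x,y)(y-x)^2\,\mu(t,x)\mu(t,y)\,dx\,dy \le 0,
\]
which is nonpositive by Assumption~\ref{ass:P_non_negative}.

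Combining the two bounds gives $\frac{d}{dt}\mathcal{L}^{\mu}(t)\le 0$, and for the strict inequality I would argue that equality forces both terms to vanish: the interaction term is zero only if $\mu$ is a Dirac mass (so $P(x,y)(y-x)^2=0$ $\mu\otimes\mu$-a.e.), and the control term is zero only if $m_1[\mu]=c$; together these force $\mu(t,\cdot)=\delta_c$, i.e.\ the consensus target has already been reached, which is excluded. The main obstacle I anticipate is the rigorous justification of the integration by parts and differentiation under the integral — this needs the confinement/regularity hypotheses on $P$ mentioned (but not fully quantified) in Section~\ref{sec:mean_field}, and a clean statement of what "$x(t)\neq x_c$" becomes in the mean-field setting (namely $\mu(t,\cdot)\neq\delta_c$); the algebraic symmetrization itself is routine once the microscopic proof is in hand. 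I would therefore state the decay under the standing assumption that $\mu$ remains a compactly supported probability measure with enough regularity for \eqref{eq:mf_linear} to make sense classically, and note that the strict inequality holds whenever $\mu(t,\cdot)$ is not the target Dirac mass.
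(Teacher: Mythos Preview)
Your proposal is correct and follows essentially the same route as the paper: differentiate $\mathcal{L}^\mu$, substitute the continuity equation, integrate by parts (with boundary terms dropped by the compact-support assumption), then split into the interaction term---symmetrized via $P(x,y)=P(y,x)$ to yield $-\tfrac12\iint P(x,y)(y-x)^2\mu\mu\,dx\,dy$---and the control term $2k(m_1[\mu]-c)^2$. The paper obtains strict negativity slightly more tersely, simply noting that the interaction term is strictly negative because $\mu$ is a density and $P(x,y)>0$ for $x\neq y$; your discussion of when equality could occur is a touch more explicit but reaches the same conclusion.
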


\begin{proof}
We differentiate $\mathcal{L}^{\mu}(t)$ with respect to time:
\begin{equation*}
    \frac{d}{dt} \mathcal{L}^{\mu}(t) = \frac{d}{dt} \int_{\mathbb{R}} |x - c|^2 \mu(t,x) \, dx = \int_{\mathbb{R}} |x - c|^2 \partial_t \mu(t,x) \, dx.
\end{equation*}
Substituting the continuity equation from the mean-field dynamics \eqref{eq:mf_linear} we get:
\begin{align*}
    \frac{d}{dt} \mathcal{L}^{\mu}(t) &= - \int_{\mathbb{R}} |x - c|^2 \nabla_x \cdot \left[\mu(t,x) \Bigl(\mathcal{P}[\mu](t,x) + k(m_1[\mu](t) - c)\Bigr)\right] \, dx.
\end{align*}
Using integration by parts and assuming that $\mu(t,x)$ vanishes at the boundary, this simplifies to:
\begin{align*}
    \frac{d}{dt} \mathcal{L}^{\mu}(t) &= 2 \int_{\mathbb{R}} (x - c) \mu(t,x) \Bigl(\mathcal{P}[\mu](t,x) + k(m_1[\mu](t) - c)\Bigr) \, dx.
\end{align*}
Now, let’s analyze the two terms separately. For the non-local interaction term $\mathcal{P}[\mu](t,x)$, we have:
\begin{align*}
    \int_{\mathbb{R}} (x - c) \mu(t,x) \mathcal{P}[\mu](t,x) \, dx &= \int_{\mathbb{R}} \int_{\mathbb{R}} (x - c) P(x,y) (y - x) \mu(t,x) \mu(t,y) \, dx \, dy.
\end{align*}
By symmetry of $P(x,y)$ and swapping the indices, it follows that:
\begin{align*}
    &\int_{\mathbb{R}} \int_{\mathbb{R}} (x - c) P(x,y) (y - x) \mu(t,x) \mu(t,y) \, dx \, dy \\
    &= - \frac{1}{2}\int_{\mathbb{R}} \int_{\mathbb{R}} P(x,y) (y - x)^2 \mu(t,x) \mu(t,y) \, dx \, dy,
\end{align*}
which is negative as $\mu(t,\cdot)$ is a density function and $P(x,y)>0$ for $x\neq y$ according to Assumption \ref{ass:P_non_negative}.
This shows that the contribution from the interaction term is negative.
For the control term involving the feedback $k(m_1[\mu](t) - c)$, we have:
\begin{align*}
    k \int_{\mathbb{R}} (x - c) \mu(t,x) (m_1[\mu](t) - c) \, dx &= k(m_1[\mu](t) - c) \int_{\mathbb{R}} (x - c) \mu(t,x) \, dx \\
    &= k(m_1[\mu](t) - c) \left(m_1[\mu](t) - c\right) \\
    &= k(m_1[\mu](t) - c)^2.
\end{align*}
This quantity is non-positive as $k<0$ and it equals zero if and only if $m_1[\mu](t)=c$. 
Thus, combining both terms, we conclude that:
\begin{equation*}
    \frac{d}{dt} \mathcal{L}^{\mu}(t) < 0,
\end{equation*}
which shows that the Lyapunov function $\mathcal{L}^{\mu}(t)$ is decreasing over time.
\end{proof}

\subsection{Sparse control Lyapunov decay}

In this section, we extend our analysis of the Lyapunov function decay to the leader-follower model under sparse control, please refer to section \ref{sec:leader_follower}. Specifically, we will show that the decay of the Lyapunov function for the followers occurs under certain conditions.

We start with the microscopic case, then we define the Lyapunov functions for followers and leaders as follows:
\begin{align*}
    \mathcal{L}^F(t) &= \omega^F \sum_{i \in I^F} (x_i(t) - c)^2, \\
    \mathcal{L}^L(t) &= \omega^L \sum_{i \in J^L} (x_i(t) - c)^2.
\end{align*}
We recall that here, $\omega^F$ and $\omega^L$ are the weights associated with the followers and leaders, respectively, while $c$ is the target state.
The dynamics of leaders and followers are given by \eqref{eq:leader_follower_micro}.

\begin{prop}[Lyapunov Function Decay: Microscopic Sparse Control] \label{prop:micro_sparse_control}
Let Assumption \ref{ass:P_non_negative} hold. If \( P(x_i, x_j) = \bar{p} \) (i.e., \(P\) is constant across all pairs of agents and the dynamics are linear), and if the control gain $k<0$ satisfies the condition
\begin{equation}\label{eq:cond_decay}
    |k| > 2 \bar{p},
\end{equation}
then the Lyapunov functions for the followers and leaders satisfy:
\[
\frac{d}{dt} \left( \mathcal{L}^F(t) + \mathcal{L}^L(t) \right) \leq \beta \left( \mathcal{L}^F(t) + \mathcal{L}^L(t) \right),
\]
for some $\beta < 0$.
\end{prop}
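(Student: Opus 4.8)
The plan is to differentiate $\mathcal{L}(t):=\mathcal{L}^F(t)+\mathcal{L}^L(t)$ along the linear leader--follower dynamics, reduce $\tfrac{d}{dt}\mathcal{L}$ to a quadratic expression in a handful of scalar moments, and then dominate it by $\beta\mathcal{L}$ using Cauchy--Schwarz. First I would simplify the dynamics: setting $P\equiv\bar p$ in \eqref{eq:leader_follower_micro} and using $\omega^F N^F=\rho^F$, $\omega^L N^L=\rho^L$, $\rho^F+\rho^L=1$, the follower equation collapses to $\dot x_i=\bar p\,(m-x_i)$ for $i\in I^F$ and the leader equation to $\dot x_i=\bar p\,(m-x_i)+k\,(m-c)$ for $i\in J^L$, where $m:=\omega^F\sum_{j\in I^F}x_j+\omega^L\sum_{j\in J^L}x_j$ is the mass-weighted mean. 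It is convenient to introduce the weighted group deviations $S^F:=\omega^F\sum_{j\in I^F}(x_j-c)$ and $S^L:=\omega^L\sum_{j\in J^L}(x_j-c)$, so that $S^F+S^L=m-c$.

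Next I would carry out the differentiation exactly as in the proof of Proposition~\ref{prop:micro_full_control}: writing $(x_i-c)(m-x_i)=(m-c)(x_i-c)-(x_i-c)^2$ and summing over each group, the interaction contributions assemble into $2\bar p\,(m-c)^2-2\bar p\,\mathcal{L}$ while the control term contributes $2k(m-c)S^L$, giving
\[
\frac{d}{dt}\mathcal{L}(t)=-2\bar p\,\mathcal{L}(t)+2\bar p\,(m-c)^2+2k\,(m-c)S^L .
\]
Substituting $m-c=S^F+S^L$ and regrouping isolates the indefinite part:
\[
\frac{d}{dt}\mathcal{L}(t)=-2\bar p\,\mathcal{L}(t)+2\bar p\,(S^F)^2+(4\bar p+2k)\,S^F S^L+(2\bar p+2k)\,(S^L)^2 .
\]
Because $k<0$ with $|k|>2\bar p$, the coefficient $2\bar p+2k$ of $(S^L)^2$ is strictly negative, so I would complete the square in $S^L$ (equivalently, apply Young's inequality to the cross term) to absorb $S^F S^L$ and $(S^L)^2$, arriving at $\tfrac{d}{dt}\mathcal{L}\le -2\bar p\,\mathcal{L}+C\,(S^F)^2$ with an explicit constant $C=C(\bar p,k)>0$. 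Finally, Jensen/Cauchy--Schwarz gives $(S^F)^2=(\omega^F)^2\bigl(\sum_{j\in I^F}(x_j-c)\bigr)^2\le (\omega^F)^2 N^F\sum_{j\in I^F}(x_j-c)^2=\rho^F\mathcal{L}^F\le\rho^F\mathcal{L}$, hence $\tfrac{d}{dt}\mathcal{L}\le(\rho^F C-2\bar p)\,\mathcal{L}=:\beta\,\mathcal{L}$.

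The genuinely quantitative step — and the step I expect to be the main obstacle — is verifying that $\beta<0$, i.e.\ $\rho^F C<2\bar p$. After substituting the explicit $C$ produced by completing the square and using $\rho^F+\rho^L=1$, this collapses to a quadratic inequality in $|k|$ with coefficients built from $\bar p$ and the mass split $\rho^F,\rho^L$, which one then checks under the hypothesis $|k|>2\bar p$. The delicate point is that the quadratic form in $(S^F,S^L)$ is not sign-definite and its $(S^F)^2$-coefficient is strictly positive, so the estimate only closes thanks to the interaction-driven dissipation $-2\bar p\,\mathcal{L}$; consequently the sign of $\beta$ is sensitive to the size of $|k|$ relative to $\bar p$, and the constant bookkeeping must be done carefully to land on a strictly negative $\beta$.
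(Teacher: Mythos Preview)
Your derivation of the exact identity
\[
\frac{d}{dt}\mathcal{L}(t)=-2\bar p\,\mathcal{L}(t)+2\bar p\,(S^F)^2+(4\bar p+2k)\,S^F S^L+(2\bar p+2k)\,(S^L)^2
\]
is correct and considerably sharper than the term-by-term estimates in the paper. However, the obstacle you flag at the end is not merely delicate bookkeeping: it is fatal. Completing the square in $S^L$ yields $C=2\bar p+\dfrac{(2\bar p+k)^2}{2|\bar p+k|}$, and the required bound $\rho^F C<2\bar p$ is \emph{not} a consequence of $|k|>2\bar p$. Indeed $C\sim |k|/2$ as $|k|\to\infty$, so for any fixed $\rho^F\in(0,1)$ the condition fails once $|k|$ is large enough; the quadratic inequality in $|k|$ you anticipate simply does not hold uniformly in $\rho^F,\rho^L$.

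This is not an artifact of a suboptimal estimate on your part. Place every follower at $c+1$ and every leader at $c-1$; then $S^F=\rho^F$, $S^L=-\rho^L$, $\mathcal{L}=1$, and your identity gives
\[
\frac{d}{dt}\mathcal{L}=-8\bar p\,\rho^F\rho^L+2|k|\,\rho^L(\rho^F-\rho^L),
\]
which is strictly positive whenever $|k|>\dfrac{4\bar p\,\rho^F}{\rho^F-\rho^L}$ (for instance $\rho^F=0.9$, $\rho^L=0.1$, $\bar p=1$, $|k|=10$ give $\tfrac{d}{dt}\mathcal{L}=0.88>0$). Hence no inequality $\tfrac{d}{dt}\mathcal{L}\le\beta\mathcal{L}$ with $\beta<0$ can hold under the sole hypothesis $|k|>2\bar p$, and your attempt cannot close for a structural reason rather than a technical one.

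For comparison, the paper's argument does not pass through your quadratic form; it drops the negative terms $Q_1,Q_3,Q_4$ and bounds the remaining cross contribution by $(4\bar p-2|k|)\,\omega^F\omega^L\bigl(\sum_{I^F}(x_i-c)\bigr)\bigl(\sum_{J^L}(x_j-c)\bigr)$, then applies Cauchy--Schwarz and AM--GM. The gap there lies at precisely the analogous spot: the product of the two sums can be negative, and even when it is positive, multiplying an \emph{upper} bound for it by the negative coefficient $4\bar p-2|k|$ reverses the inequality rather than preserving it. Your route has the merit of making the indefinite nature of the quadratic form in $(S^F,S^L)$ explicit; the failure you worried about is exactly the one the counterexample above exhibits.
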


\begin{proof}
We begin by analyzing the time derivative of the followers' Lyapunov function $\mathcal{L}^F(t)$:
\[
\frac{d}{dt} \mathcal{L}^F(t) = \frac{d}{dt} \left( \omega^F \sum_{i \in I^F} (x_i - c)^2 \right) = 2 \omega^F \sum_{i \in I^F} (x_i - c) \dot{x}_i.
\]
Substituting the dynamics of the followers:
\[
\dot{x}_i = \omega^F \sum_{j \in I^F} P(x_i, x_j)(x_j - x_i) + \omega^L \sum_{j \in J^L} P(x_i, x_j)(x_j - x_i),
\]
we get:
\begin{align*}
    \frac{d}{dt} \mathcal{L}^F(t) &= 2 \omega^F \sum_{i \in I^F} (x_i - c) \left( \omega^F \sum_{j \in I^F} P(x_i, x_j)(x_j - x_i) \right)
    \\
    & \qquad + 2 \omega^F \sum_{i \in I^F} (x_i - c) \left(\omega^L \sum_{j \in J^L} P(x_i, x_j)(x_j - x_i) \right).
\end{align*}
Expanding this expression, we define two terms:
\[
Q_1 = 2 (\omega^F)^2 \sum_{i \in I^F} \sum_{j \in I^F} (x_i - c) P(x_i, x_j)(x_j - x_i),
\]
\[
Q_2 = 2 \omega^F \omega^L \sum_{i \in I^F} \sum_{j \in J^L} (x_i - c) P(x_i, x_j)(x_j - x_i).
\]
Thus, the time derivative of $\mathcal{L}^F(t)$ can be written as:
\[
\frac{d}{dt} \mathcal{L}^F(t) = Q_1 + Q_2.
\]
\textbf{Analysis of $Q_1$:}
Using the same steps as in the proof of the microscopic full control case (please refer to Proposition \ref{prop:micro_full_control}), we can swap the indices and use the symmetry of $P(x_i, x_j)$ to show that this term is non-positive:
		\[
		Q_1 = - (\omega^{F})^2 \sum_{i \in I^F} \sum_{j \in I^F} P(x_i, x_j)(x_j - x_i)^2 \leq 0.
		\]
\\
\textbf{Analysis of $Q_2$:}
Substituting $P(x_i, x_j) = \bar p$, with few computations, we rewrite $Q_2$:
\begin{align*}
    Q_2 &= 2 \bar p \, \omega^F \omega^L \sum_{i \in I^F} (x_i - c) \sum_{j \in J^L} (x_j - c)
    \, - \, 2 \bar p \, \omega^F \omega^L N^L\sum_{i \in I^F} (x_i - c)^2 .
\end{align*}
Since the second term is non-positive, we obtain:
\[ Q_2 \leq 2 \bar p \, \omega^F \omega^L \sum_{i \in I^F} (x_i - c) \sum_{j \in J^L} (x_j - c).\]

Next, we analyze the leaders' Lyapunov function $\mathcal{L}^L(t)$. Its time derivative is:
\[
\frac{d}{dt} \mathcal{L}^L(t) = \frac{d}{dt} \left( \omega^L \sum_{i \in J^L} (x_i - c)^2 \right) = 2 \omega^L \sum_{i \in J^L} (x_i - c) \dot{x}_i.
\]
Substituting the dynamics of the leaders:
\begin{align*}
    \dot{x}_i &= \omega^F \sum_{j \in I^F} P(x_i, x_j)(x_j - x_i) + \omega^L \sum_{j \in J^L} P(x_i, x_j)(x_j - x_i)
    \\
    & \qquad + k \left( \omega^F \sum_{j \in I^F} (x_j - c) + \omega^L \sum_{j \in J^L} (x_j - c) \right),
\end{align*}
we get:
\[
\frac{d}{dt} \mathcal{L}^L(t) = Q_3 + Q_4 + Q_5 + Q_6,
\]
where:
\begin{align*}
    Q_3 &= 2 (\omega^L)^2 \sum_{i \in J^L} \sum_{j \in J^L} (x_i - c) P(x_i, x_j)(x_j - x_i),
\\
Q_4 &= 2 (\omega^L)^2 k \sum_{i \in J^L} (x_i - c) \sum_{j \in J^L} (x_j - c),
\\
Q_5 &= 2 \omega^L \omega^F \sum_{i \in J^L} \sum_{j \in I^F} (x_i - c) P(x_i, x_j)(x_j - x_i),
\\
Q_6 &= 2 \omega^L \omega^F k \sum_{i \in J^L} (x_i - c) \sum_{j \in I^F} (x_j - c).
\end{align*}
\textbf{Analysis of $Q_3$:}
We observe that $Q_3$ has the same form as $Q_1$.
%Using the symmetry of $P(x_i, x_j)$ and swapping indices, we find:
%\[
%Q_3 = -(\omega^L)^2 \sum_{i \in J^L} \sum_{j \in J^L} P(x_i, x_j)(x_j - x_i)^2 \leq 0.
%\]
\\
\textbf{Analysis of $Q_4$:}
This term involves the feedback control applied to the leaders. Since $k<0$, it can be rewritten as:
\[
Q_4 = - 2 (\omega^L)^2 |k| \left( \sum_{i \in J^L} (x_i - c) \right)^2\leq 0.
\]
\\
\textbf{Analysis of $Q_5$:}
We observe that $Q_5$ has the same form as $Q_2$.
\\
\textbf{Analysis of $Q_6$:}
Since $k<0$, this term can be rewritten as:
\[
Q_6 = -2 \omega^F \omega^L |k| \sum_{i \in I^F} (x_i - c) \sum_{j \in J^L} (x_j - c).
\]
\\
The derivative of the total Lyapunov function is:
\[
\frac{d}{dt} \left( \mathcal{L}^F + \mathcal{L}^L \right) = Q_1 + Q_2 + Q_3 + Q_4 + Q_5 + Q_6.
\]
Substituting results for $Q_1, Q_3, Q_4$:
\[
Q_1 + Q_3 + Q_4 \leq 0,
\]
and combining $Q_2, Q_5, Q_6$:
\[
Q_2 + Q_5 + Q_6\leq \left( 4 \bar{p} - 2 |k| \right) \left( \omega^F \sum_{i \in I^F} (x_i - c) \right) \left( \omega^L \sum_{j \in J^L} (x_j - c) \right),
\]
we have:
\begin{align*}
    \frac{d}{dt} \left( \mathcal{L}^F + \mathcal{L}^L \right)
    &\leq \left( 4 \bar{p} - 2 |k| \right) \left( \omega^F \sum_{i \in I^F} (x_i - c) \right) \left( \omega^L \sum_{j \in J^L} (x_j - c) \right).
\end{align*}
Using the Cauchy-Schwarz inequality, we can bound the cross terms as:
\[
\left( \sum_{i \in I^F} (x_i - c) \right) \left( \sum_{j \in J^L} (x_j - c) \right)
\leq \sqrt{\sum_{i \in I^F} (x_i - c)^2} \sqrt{\sum_{j \in J^L} (x_j - c)^2}.
\]
Using the inequality \(\sqrt{\mathcal{L}^F \mathcal{L}^L} \leq \frac{1}{2} \left( \mathcal{L}^F + \mathcal{L}^L \right)\), we have:
\[
\frac{d}{dt} \left( \mathcal{L}^F + \mathcal{L}^L \right) 
\leq \left( 4 \bar{p} - 2 |k| \right) \frac{\sqrt{\omega^F \omega^L}}{2} \left( \mathcal{L}^F + \mathcal{L}^L \right).
\]
Defining \(\beta = \left( 4 \bar{p} - 2 |k| \right) \frac{\sqrt{\omega^F \omega^L}}{2}\), the decay condition \(\beta < 0\) is satisfied when:
\[
|k| > 2 \bar{p}.
\]
Thus, under the condition \(k < -2 \bar{p}\), we ensure that:
\[
\frac{d}{dt} \left( \mathcal{L}^F + \mathcal{L}^L \right) \leq \beta \left( \mathcal{L}^F + \mathcal{L}^L \right),
\]
where \(\beta < 0\), completing the proof.
\end{proof}

For the sparse case, we observe that a result of Lyapunov function decay holds only when the dynamics are linear, specifically when \( P(x_i, x_j) = \bar{p} \) is constant across all pairs of agents. In this case, the decay condition depends on the control gain satisfying \( |k| > 2 \bar{p} \). This condition ensures that the total Lyapunov function decays over time, ultimately leading to the stabilization of the system. However, for non-linear dynamics, such as when \( P(x_i, x_j) \) is not constant, the analysis and conditions for decay may change, and the result does not directly apply.

We extend now our analysis of the Lyapunov function decay to the leader-follower model under sparse control, considering a mean-field representation for the followers while keeping the leaders in a microscopic framework. This builds upon the analysis presented in Section~\ref{sec:leader_follower}.
Specifically, we aim to demonstrate that the decay of the Lyapunov functions for the system occurs under certain conditions. Here, the followers' dynamics are described by a density $\nu(t, x)$ in the mean-field limit, while the leaders' dynamics are given in terms of individual agent positions $x_i(t)$ for $i \in J^L$.

We define the Lyapunov functions for the mean-field followers and the microscopic leaders as follows:
\begin{align*}
    \mathcal{L}^\nu(t) &= \int_{\mathbb{R}} |x - c|^2 \nu(t,x) \, dx, \\
    \mathcal{L}^\eta(t) &= \omega^L \sum_{i \in J^L} (x_i(t) - c)^2.
\end{align*}
Here, $\omega^L$ represent the weight associated with the leaders, and $c$ is the target state for both populations.
The dynamics of the system are governed by the mean-field equations for the followers and the microscopic equations for the leaders as in Eq. \eqref{eq:leader_follower_mf}. 
In the following, we analyze the time evolution of the Lyapunov functions $\mathcal{L}^\nu(t)$ and $\mathcal{L}^\eta(t)$, showing that their combined decay is governed by the interactions between the mean-field followers and microscopic leaders, under appropriate conditions on the parameters of the system.

\begin{prop}[Lyapunov Function Decay: Mean-field Sparse Control]\label{prop:LF_MF}
Let Assumption \ref{ass:P_non_negative} hold. If \( P(x, y) = \bar{p} \) (i.e., \(P\) is constant across all pairs of agents and the dynamics are linear), and if the control gain $k<0$ satisfies the condition
\[
|k| > 2 \bar{p},
\]
then the Lyapunov functions for the mean-field followers and microscopic leaders satisfy:
\[
\frac{d}{dt} \left( \mathcal{L}^\nu(t) + \mathcal{L}^\eta(t) \right) \leq \beta \left( \mathcal{L}^\nu(t) + \mathcal{L}^\eta(t) \right),
\]
for some $\beta < 0$.
\end{prop}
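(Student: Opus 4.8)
The plan is to mirror the microscopic sparse-control computation of Proposition~\ref{prop:micro_sparse_control} as closely as possible, replacing the follower sums $\omega^F\sum_{i\in I^F}(\cdot)$ by integrals against the density $\nu(t,x)$ and discrete follower averages by moments $m_1[\nu]$. First I would differentiate $\mathcal{L}^\nu(t)$ in time, insert the continuity equation for $\nu$ from \eqref{eq:leader_follower_mf}, and integrate by parts (assuming $\nu$ vanishes at the boundary, exactly as in Proposition~\ref{prop:mf_full_control}) to obtain
\[
\frac{d}{dt}\mathcal{L}^\nu(t) = 2\int_{\mathbb R}(x-c)\,\nu(t,x)\Bigl(\mathcal{P}[\nu](t,x) + \mathcal{P}[\eta^{N^L}](t,x)\Bigr)\,dx,
\]
where $\mathcal{P}[\eta^{N^L}](t,x)=\sum_{j\in J^L}\omega^L P(x,x_j)(x_j-x)$. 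The first piece is the follower--follower interaction term, which by the symmetrization trick (swap $x\leftrightarrow y$, use symmetry of $P$) equals $-\tfrac12\iint P(x,y)(y-x)^2\nu\,\nu\le 0$, the analogue of $Q_1$. The second piece is the follower--leader cross term, the analogue of $Q_2$; setting $P\equiv\bar p$ and expanding $(x_j-x)=(x_j-c)-(x-c)$ gives a negative self-term $-2\bar p\,\omega^L N^L\mathcal{L}^\nu$ plus the cross term $2\bar p\bigl(m_1[\nu]-\rho^F c\bigr)\,\omega^L\sum_{j\in J^L}(x_j-c)$, noting $\int(x-c)\nu\,dx = m_1[\nu]-\rho^F c$ since $\int\nu\,dx=\rho^F$.

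Next I would differentiate $\mathcal{L}^\eta(t)=\omega^L\sum_{i\in J^L}(x_i-c)^2$, insert the leader ODE from \eqref{eq:leader_follower_mf}, and split into four terms analogous to $Q_3,Q_4,Q_5,Q_6$: the leader--leader interaction ($\le 0$ by the same symmetrization as $Q_1$, using $P\equiv\bar p$ and the self-term sign), the leader feedback-on-leaders term $Q_4 = -2(\omega^L)^2|k|\bigl(\sum_{j\in J^L}(x_j-c)\bigr)^2\le 0$, the leader--follower interaction cross term (analogue of $Q_5$, producing $2\bar p\,\omega^L\sum_{i\in J^L}(x_i-c)\,(m_1[\nu]-\rho^F c)$ plus a non-positive self-term), and the feedback-on-followers term $Q_6 = -2\omega^L|k|\sum_{i\in J^L}(x_i-c)\,(m_1[\nu]-\rho^F c)$, using that the control in \eqref{eq:leader_follower_mf} reads $k(m_1[\nu]-\rho^F c + \omega^L\sum_{j\in J^L}(x_j-c))$. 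Collecting everything, all interaction self-terms and the two $|k|$-quadratics are non-positive, and the surviving cross-terms combine into $(4\bar p - 2|k|)\bigl(m_1[\nu]-\rho^F c\bigr)\bigl(\omega^L\sum_{j\in J^L}(x_j-c)\bigr)$, exactly the mean-field image of the bound in Proposition~\ref{prop:micro_sparse_control}.

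Finally I would close the estimate as before: bound the cross-term by Cauchy--Schwarz, observing $\bigl|m_1[\nu]-\rho^F c\bigr| = \bigl|\int(x-c)\nu\,dx\bigr|\le \bigl(\int|x-c|^2\nu\,dx\bigr)^{1/2}(\int\nu\,dx)^{1/2} = \sqrt{\rho^F}\,\sqrt{\mathcal{L}^\nu}$ and $\bigl|\omega^L\sum_{j\in J^L}(x_j-c)\bigr|\le \sqrt{\omega^L N^L}\,\sqrt{\mathcal{L}^\eta} = \sqrt{\rho^L}\,\sqrt{\mathcal{L}^\eta}$, then use $\sqrt{\mathcal{L}^\nu\mathcal{L}^\eta}\le\tfrac12(\mathcal{L}^\nu+\mathcal{L}^\eta)$ to get $\frac{d}{dt}(\mathcal{L}^\nu+\mathcal{L}^\eta)\le (4\bar p-2|k|)\,\tfrac{\sqrt{\rho^F\rho^L}}{2}(\mathcal{L}^\nu+\mathcal{L}^\eta)$, so $\beta = (4\bar p - 2|k|)\sqrt{\rho^F\rho^L}/2 < 0$ precisely when $|k|>2\bar p$.

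The main obstacle I anticipate is purely bookkeeping rather than conceptual: getting the normalization constants right in the mean-field terms. Unlike the microscopic case where $\int\nu\,dx$ would be $1$, here $\int\nu\,dx=\rho^F$ and $\omega^L N^L=\rho^L$, so the constant $m_1[\nu]-\rho^F c$ (not $m_1[\nu]-c$) and the factors $\sqrt{\rho^F}$, $\sqrt{\rho^L}$ must be tracked carefully through the Cauchy--Schwarz step; a secondary point is justifying the integration by parts, which I would handle exactly as in Proposition~\ref{prop:mf_full_control} by invoking the compact-support/decay assumption on $\nu$.
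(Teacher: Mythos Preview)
Your proposal is correct and follows essentially the same route as the paper: differentiate $\mathcal{L}^\nu$ and $\mathcal{L}^\eta$, split each into the same $\tilde Q_1,\ldots,\tilde Q_6$ pieces, discard the sign-definite self-terms, and collect the remaining cross-terms into $(4\bar p-2|k|)\,(m_1[\nu]-\rho^F c)\bigl(\omega^L\sum_{j\in J^L}(x_j-c)\bigr)$ before closing with Cauchy--Schwarz and the arithmetic--geometric inequality. The one small difference is in the final constant: you carefully track the mass normalizations $\int\nu\,dx=\rho^F$ and $\omega^L N^L=\rho^L$ through the Cauchy--Schwarz step and obtain $\beta=(4\bar p-2|k|)\sqrt{\rho^F\rho^L}/2$, whereas the paper simply bounds $(m_1[\nu]-\rho^F c)(m_1[\eta^{N^L}]-\rho^L c)\le\sqrt{\mathcal{L}^\nu\mathcal{L}^\eta}$ directly (valid since $\rho^F,\rho^L\le 1$) and arrives at the slightly weaker $\beta=(4\bar p-2|k|)/2$; your version is sharper but both give $\beta<0$ under $|k|>2\bar p$.
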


\begin{proof}
We begin the proof by analyzing the time derivative of the Lyapunov function for the followers' density, \(\mathcal{L}^\nu(t)\):
\[
\frac{d}{dt} \mathcal{L}^\nu(t) = \int_{\mathbb{R}} |x - c|^2 \cdot \partial_t \nu(t, x) \, dx.
\]
Substituting the continuity equation governing the evolution of \(\nu(t,x)\) from \eqref{eq:leader_follower_mf}, and integrating by parts, assuming that \(\nu(t,x)\) vanishes at the boundary, this simplifies to:
\[
\frac{d}{dt} \mathcal{L}^\nu(t) = \tilde{Q}_1 + \tilde{Q}_2,
\]
where we define:
\begin{align*}
\tilde{Q}_1 &= 2 \int_{\mathbb{R}} \int_{\mathbb{R}}  (x - c)   P(x, y)(y - x) \nu(t, x) \nu(t, y)  \, dx \, dy,
\\
\tilde{Q}_2 &= 2 \int_{\mathbb{R}} \int_{\mathbb{R}} (x - c)   P(x, y)(y - x)\nu(t, x) \eta^{N^L}(t, y) \, dx  \, dy.
\end{align*}
\textbf{Analysis of \(\tilde{Q}_1\):}  
Exactly as in the proof of Proposition \ref{prop:mf_full_control}, we exploit the symmetry of \(P(x, y)\) and perform a swapping of indices, noting that:
\[
\tilde{Q}_1 \leq 0
\]
\textbf{Analysis of \(\tilde{Q}_2\):}  
Assuming $P(x,y) = \bar{p}$ and expanding the term \((y - x) = (y - c) - (x - c)\), we have:
\begin{align*}
\tilde{Q}_2 &= 2 \bar{p} \int_{\mathbb{R}} \int_{\mathbb{R}} (x - c) \Big[(y - c) - (x - c)\Big] \nu(t, x) \eta^{N^L}(t, y) \, dx \, dy
\\
&=2 \bar{p} \int_{\mathbb{R}} (x - c) \nu(t, x) \, dx \int_{\mathbb{R}} (y - c) \eta^{N^L}(t, y) \, dy 
\\
& \qquad - 2 \bar{p} \int_{\mathbb{R}} \int_{\mathbb{R}} |x - c|^2\nu(t, x) \eta^{N^L}(t, y) \, dx \, dy.
\end{align*}
Since the second term is non-positive, we can write:
\[
\tilde{Q}_2 \leq 2 \bar{p} \,  (m_1[\nu](t) - \rho^F \, c) \,(m_1[\eta^{N^L}](t) - \rho^L \, c).
\]
Thus, \(\tilde{Q}_2\) simplifies to a coupling term involving the average positions of the leaders and followers.

Next, we analyze the time derivative of the leaders' Lyapunov function \(\mathcal{L}^\eta(t)\). We begin by computing:
\[
\frac{d}{dt} \mathcal{L}^\eta(t) = 2 \omega^L \sum_{i \in J^L} (x_i(t) - c) \, \dot{x}_i(t).
\]
If we substitue the dynamics of the leaders from \eqref{eq:leader_follower_mf},
this leads to the following expression:
\begin{align*}
\frac{d}{dt} \mathcal{L}^\eta(t) = \, \tilde Q_3 + \tilde Q_4 +\tilde Q_5 +\tilde Q_6,
\end{align*}
where the terms are defined as follows:
\begin{align*}
\tilde Q_3 &= 2 (\omega^L)^2 \sum_{i \in J^L} \sum_{j \in J^L} (x_i - c) P(x_i, x_j)(x_j - x_i),
\\
\tilde Q_4 &= 2 (\omega^L)^2 k \sum_{i \in J^L} (x_i - c) \sum_{j \in J^L} (x_j - c),
\\
\tilde Q_5 &= 2 \omega^L \sum_{i \in J^L} (x_i - c) \int_{\mathbb{R}} P(x_i, y)(y - x_i) \nu(t,y) \, dy ,
\\
\tilde Q_6 &= 2 \omega^L k \sum_{i \in J^L} (x_i - c) \left( m_1[\nu] - \rho^F c\right).
\end{align*}
\textbf{Analysis of $\tilde Q_3$:}
We observe that this term is equal to $Q_3$ in the proof of Proposition \ref{prop:micro_sparse_control}, then
\[
\tilde Q_3 \leq 0.
\]
\\
\textbf{Analysis of $\tilde Q_4$:}
We also have that $\tilde Q_4 = Q_4$ from the proof of Proposition \ref{prop:micro_sparse_control}. We then have
\[\tilde Q_4 \leq 0\]
\\
\textbf{Analysis of $\tilde Q_5$:}
Since
\begin{equation*}
	\eta^{N^L}(t,y) = \omega^L \sum_{j \in J^L} \delta(y - x_j(t)),
\end{equation*}
$\tilde Q_5$ has the same form as $\tilde Q_2$.
\\
\textbf{Analysis of $\tilde Q_6$:}
Since $k<0$, this term can be rewritten as:
\[
\tilde Q_6 = -2 \omega^L |k| \sum_{i \in J^L} (x_i - c) \left( m_1[\nu] - \rho^F c\right).
\]
\\
The derivative of the total Lyapunov function is:
\[
\frac{d}{dt} \left( \mathcal{L}^\nu + \mathcal{L}^\eta \right) = \tilde Q_1 + \tilde Q_2 + \tilde Q_3 + \tilde Q_4 + \tilde Q_5 + \tilde Q_6.
\]
Substituting results for $\tilde Q_1, \tilde Q_3, \tilde Q_4$:
\[
\tilde Q_1 + \tilde Q_3 + \tilde Q_4 \leq 0,
\]
and combining $\tilde Q_2, \tilde Q_5, \tilde Q_6$:
\[
\tilde Q_2 + \tilde Q_5 + \tilde Q_6 \leq \left( 4 \bar{p} - 2 |k| \right) \,  (m_1[\nu](t) - \rho^F \, c) \,(m_1[\eta^{N^L}](t) - \rho^L \, c),
\]
we have:
\begin{align*}
    \frac{d}{dt} \left( \mathcal{L}^\nu + \mathcal{L}^\eta \right)
    &\leq \left( 4 \bar{p} - 2 |k| \right) \,  (m_1[\nu](t) - \rho^F \, c) \,(m_1[\eta^{N^L}](t) - \rho^L \, c).
\end{align*}
Using the Cauchy-Schwarz inequality, we can bound the cross terms as:
\[
(m_1[\nu](t) - \rho^F \, c) \, (m_1[\eta^{N^L}](t) - \rho^L \, c)
\leq \sqrt{\mathcal{L}^\nu \, \mathcal{L}^\eta } \leq \frac{1}{2}({\mathcal{L}^\nu + \mathcal{L}^\eta }).
\]
Thus, we can write:
\[
\frac{d}{dt} \left( \mathcal{L}^\nu + \mathcal{L}^\eta \right) 
\leq \left( 4 \bar{p} - 2 |k| \right) \frac{1}{2} \left( \mathcal{L}^\nu + \mathcal{L}^\eta \right).
\]
Defining \(\beta = \left( 4 \bar{p} - 2 |k| \right)/2\), the decay condition \(\beta < 0\) is satisfied when:
\[
|k| > 2 \bar{p}.
\]
Therefore, under the condition \(k < -2 \bar{p}\), we ensure that:
\[
\frac{d}{dt} \left( \mathcal{L}^\nu + \mathcal{L}^\eta \right) \leq \beta \left( \mathcal{L}^\nu + \mathcal{L}^\eta \right),
\]
where \(\beta < 0\). This completes the proof.
\end{proof}

\section{Numerical tests}\label{sec:numtests}

In this section, we present numerical experiments to validate the proposed theoretical results. In all simulations, we employ the following interaction function:  
\begin{equation*}
    P(x_i, x_j) = \frac{\bar{p}}{(1+ \Vert x_i - x_j \Vert^2)^2},
\end{equation*}  
where $\bar{p} = P(c,c) = 0.04$ represents the maximum attraction strength. The denominator ensures that interactions weaken as the distance between agents increases, leading to a decay in influence over long ranges.
This interaction kernel is reminiscent of the classical Cucker-Smale model \cite{cucker2007emergent}, which is typically employed in second-order models to describe velocity alignment in collective dynamics. However, in our case, we adapt this type of kernel within a first-order model, where it plays the role of an attraction mechanism rather than an alignment rule. This choice allows us to capture nonlocal interactions that drive agents toward consensus-like behavior while preserving the qualitative decay properties of the original Cucker-Smale formulation.

The initial time is always set to $t_0 = 0$, with a time step of $\Delta t = 0.01$ and a total simulation horizon of $T = 400$, ensuring a sufficiently long observation window.

The target state is set to $c = 1$, while the control parameter is chosen as $k = -0.1$. Notably, these parameter choices satisfy condition \eqref{eq:cond_decay}, ensuring the exponential decay of the Lyapunov functions in the leader-follower model.

The initial positions are drawn from a uniform distribution over the interval $[2,5]$ for both the microscopic and mean-field settings, providing a consistent initialization framework across different scales of the model.

We introduce now the numerical methods used to approximate the microscopic dynamics given by \eqref{nonlin_dynamics}-\eqref{eq:leader_follower_micro} and their mean-field counterparts \eqref{eq:mf_linear}-\eqref{eq:leader_follower_mf}.
For the numerical experiments, we discretize the microscopic dynamics using a forward Euler scheme with time step $\Delta t = 0.01$ over the time horizon $[t_0, T]$, and a total number of agents $N=50$.

To approximate the mean-field dynamics governed by \eqref{eq:mf_linear}-\eqref{eq:leader_follower_mf}, we adopt the Mean-Field Monte Carlo (MFMC) method introduced in \cite{albi2013binary}.
Specifically, for the approximation of \eqref{eq:mf_linear}, we consider a set of $\hat{N} = 10000$ particles sampled from the initial distribution $\mu^0(x)$. The mean-field evolution is then approximated through a particle-based approach, where the non-local integrals describing interactions are computed using finite sums over a randomly selected subset of $\hat{N}_s = 1000$ particles.

This Monte Carlo approach reduces the computational cost of evaluating interaction terms from $\mathcal{O}(\hat{N}^2)$ to $\mathcal{O}(\hat{N}_s \hat{N})$. In the limiting case where $\hat{N}_s = \hat{N}$, the method recovers the full microscopic system with $\hat{N}$ particles.

The same strategy is employed for the leader-follower model; however, in this case, the MFMC method is applied solely to the follower density. We remind that in \eqref{eq:leader_follower_mf}, only the followers are treated as a continuous density, while the leader remains a microscopic agent.
In the following simulations, the single leader ($N^L = 1$), is evolved microscopically, while we employ MFMC sampling $\hat{N}^F = 9999$ particles from the initial follower distribution of followers $\nu^0(x)$, and approximating the non-local integrals with finite sums over a randomly selected subset of $\hat{N}^F_s = 1000$ particles.

\begin{figure}[H]
	\centering
	\includegraphics[width=0.45\linewidth]{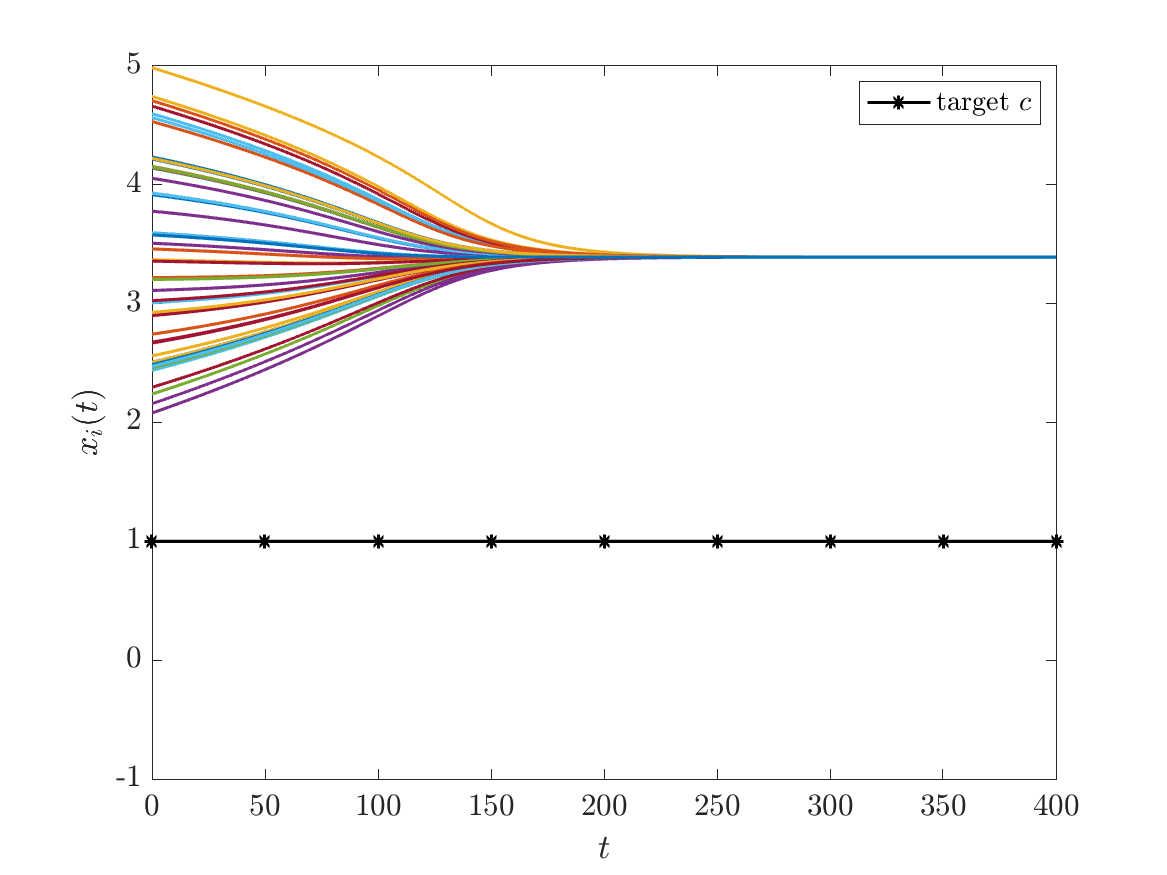}
	\includegraphics[width=0.45\linewidth]{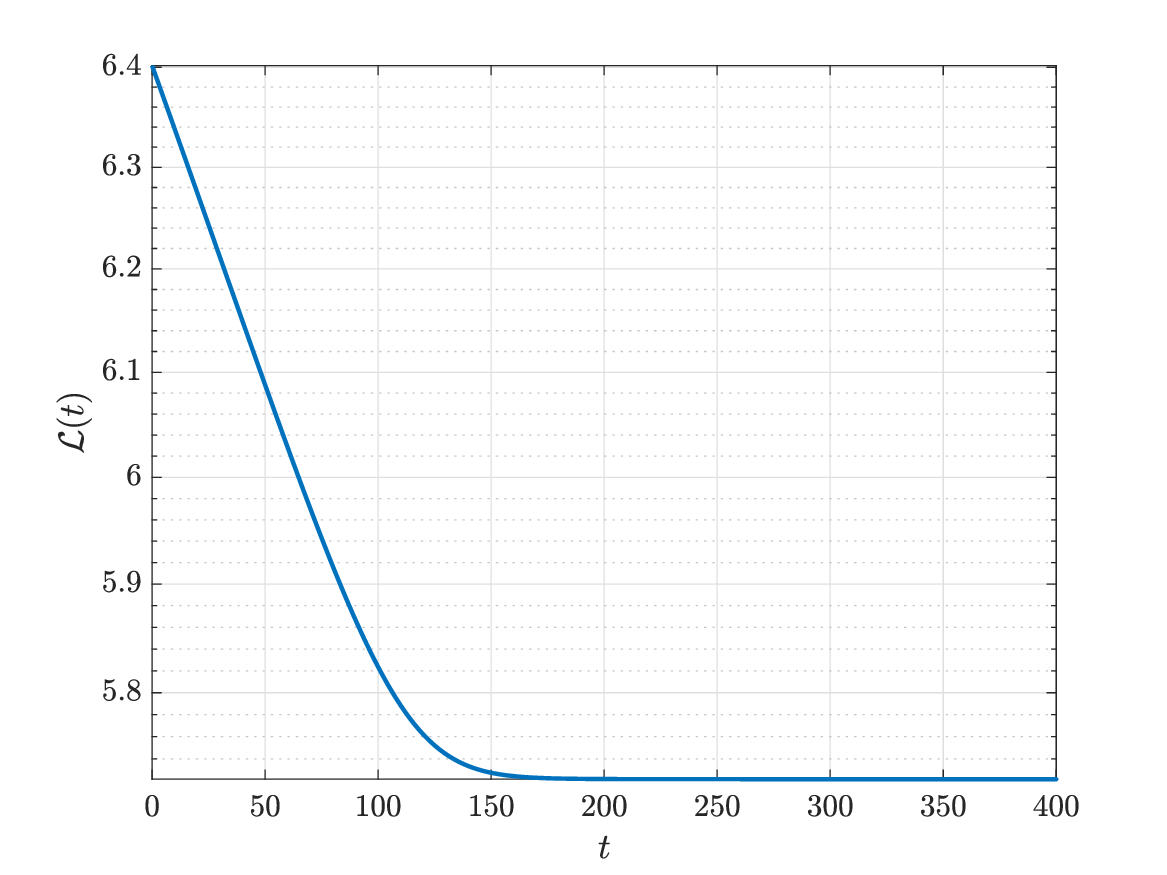}\\
	\includegraphics[width=0.45\linewidth]{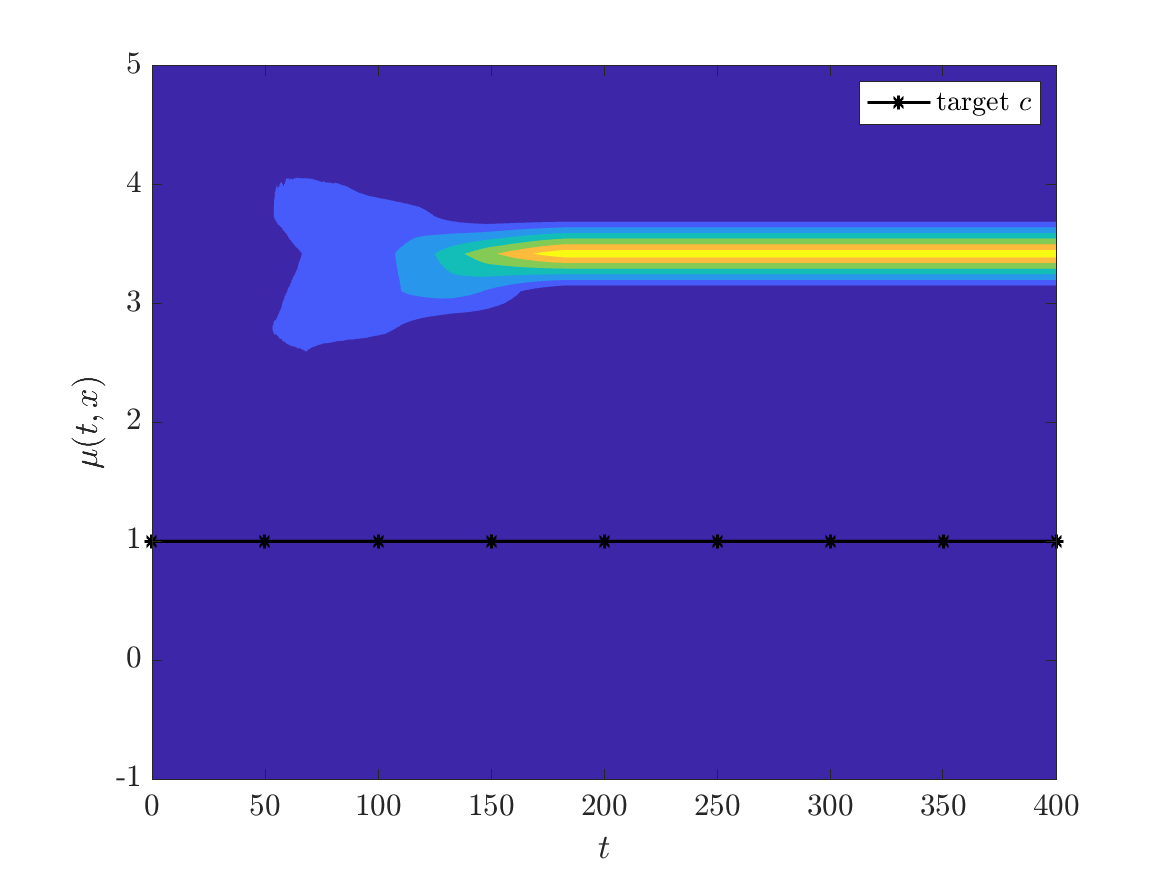}
	\includegraphics[width=0.45\linewidth]{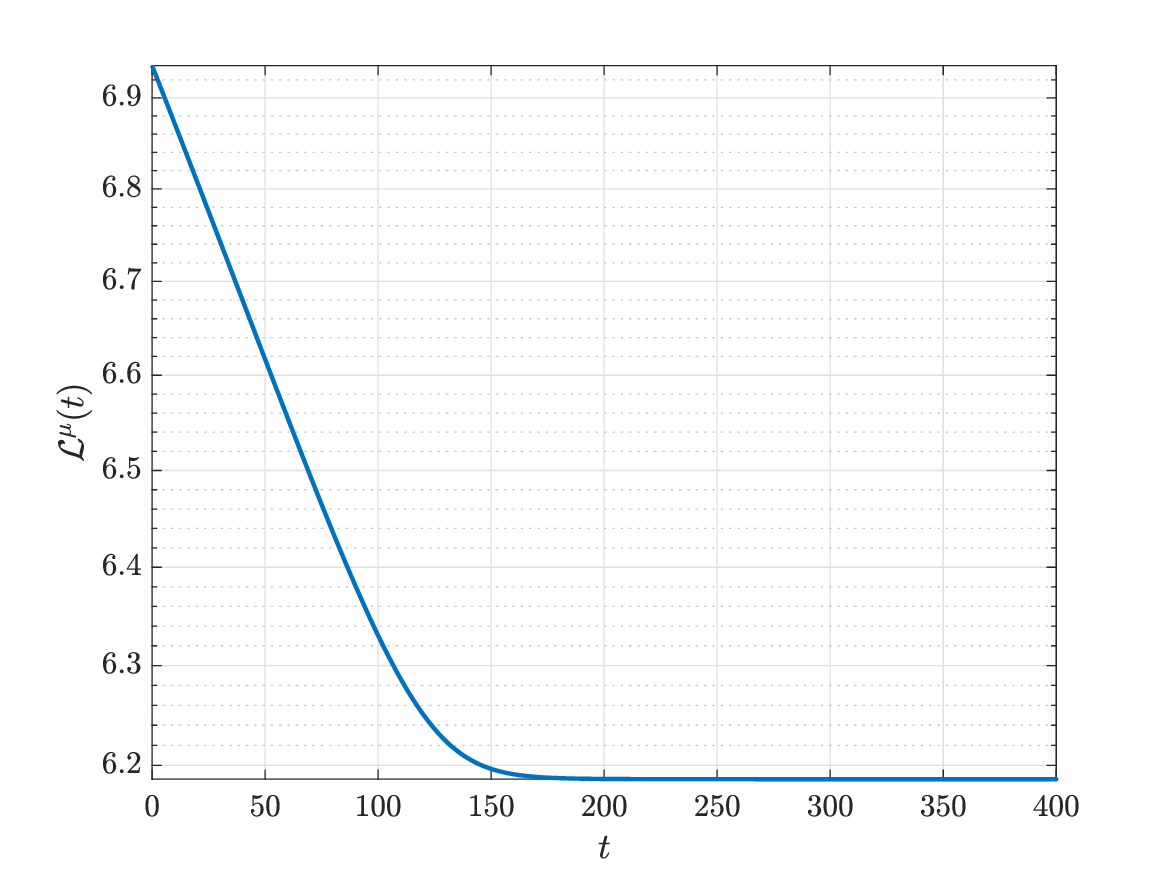}
	\caption{Evolution of the uncontrolled system. Left column: microscopic trajectories (top) and mean-field density (bottom). Right column: Lyapunov function decay in microscopic and mean-field settings.}
	\label{fig:uncontrolled}
\end{figure}

We first analyze the non-linear system in the absence of control. This means that in Eq.~\eqref{eq:dyn_contr}, we consider $b_i$ identically zero for every agent $i$. Figure \ref{fig:uncontrolled} shows that agents fail to reach the target state $c=1$ both for the microscopic and mean-field dynamics. The agents reach a consensus point given by their initial mean position. The Lyapunov function does not decrease sufficiently over time and remains approximately at the value $6$, indicating that stabilization requires a control strategy.

\begin{figure}[H]
	\centering
	\includegraphics[width=0.45\linewidth]{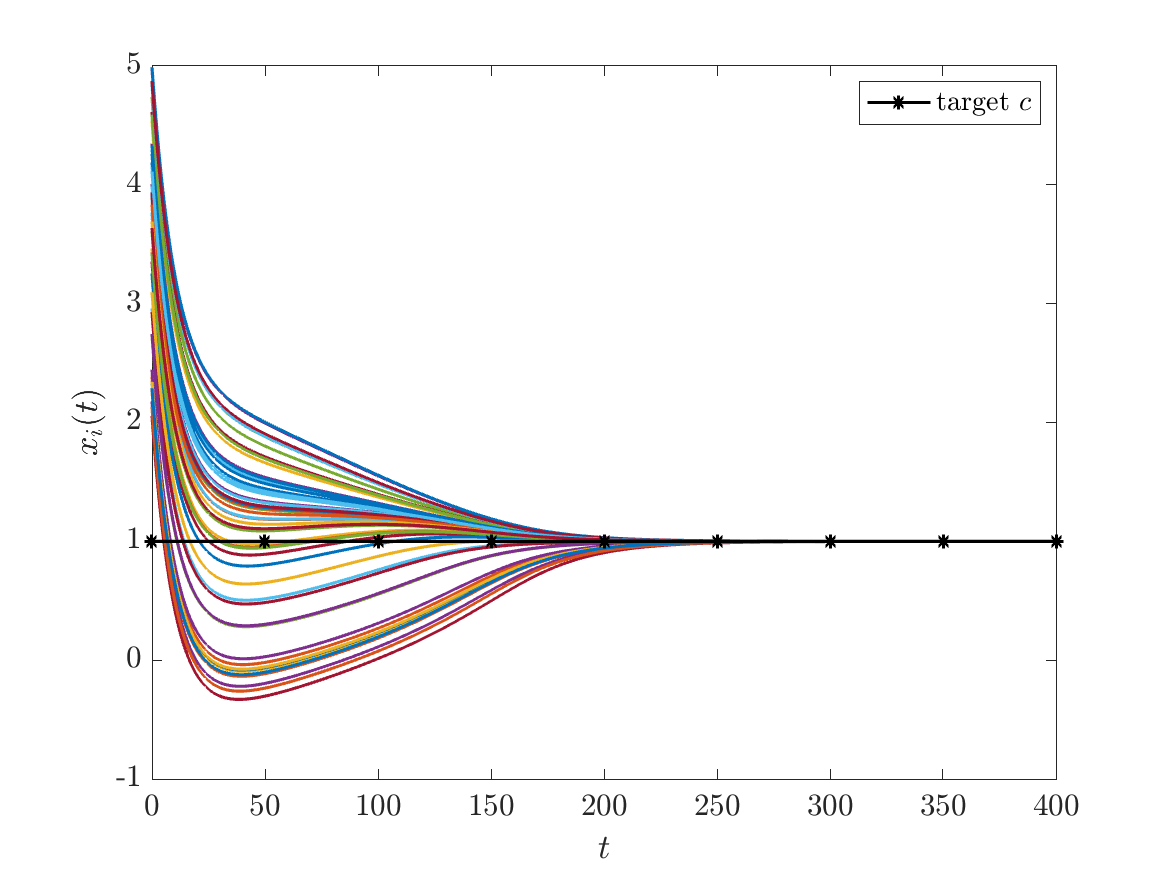}
	\includegraphics[width=0.45\linewidth]{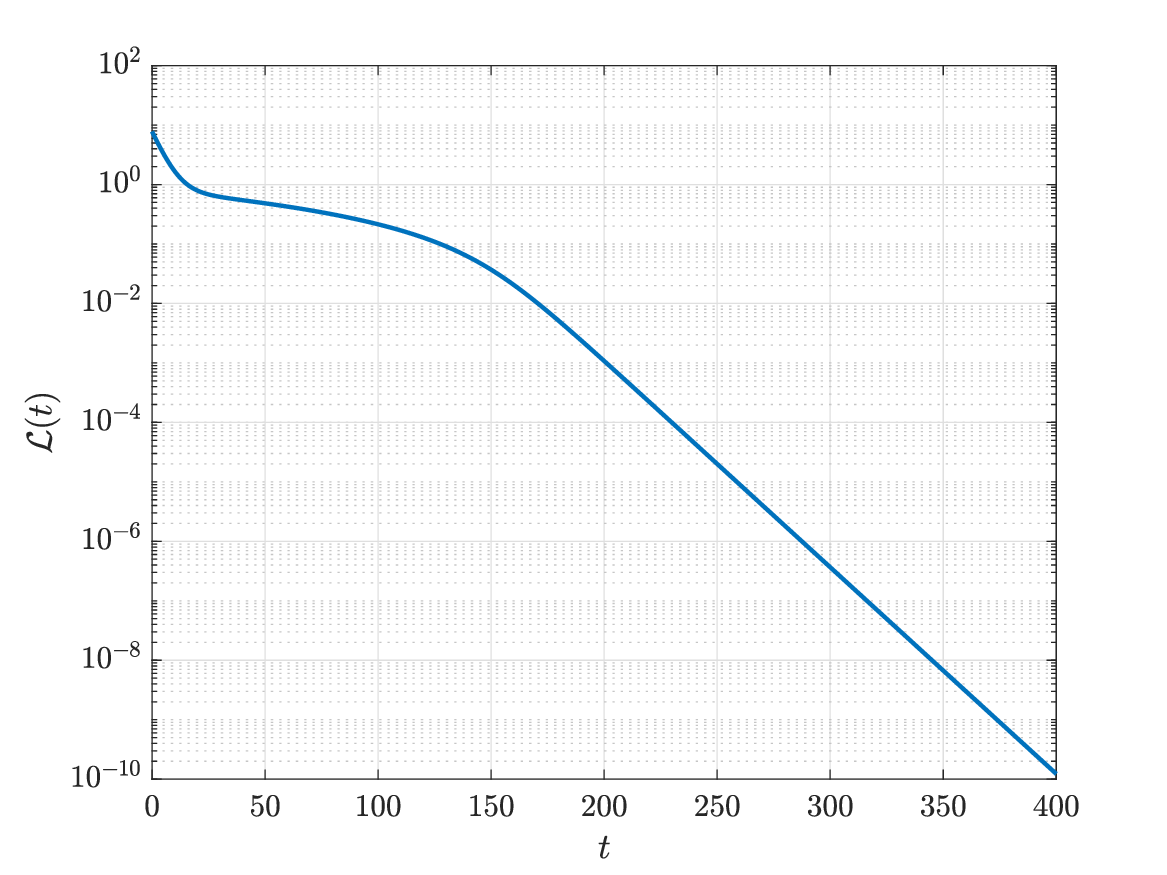}\\
	\includegraphics[width=0.45\linewidth]{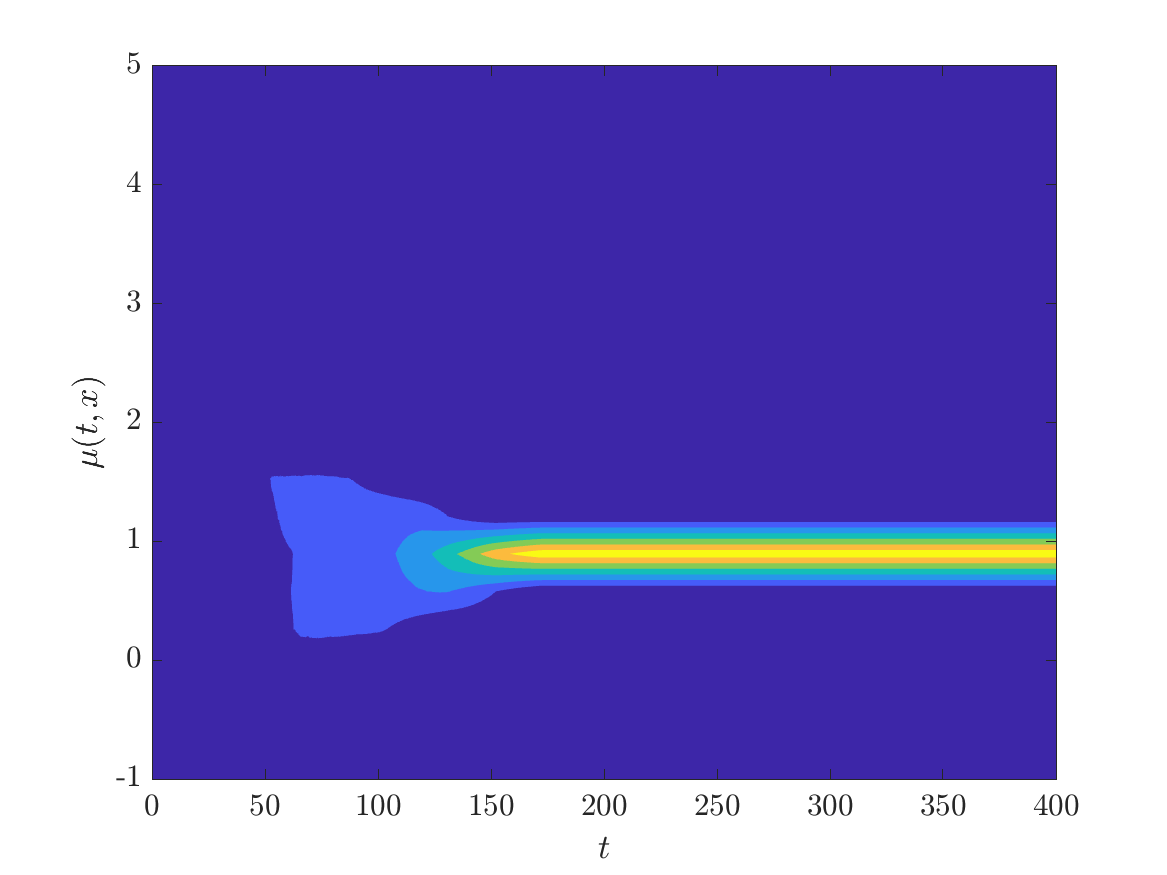}
	\includegraphics[width=0.45\linewidth]{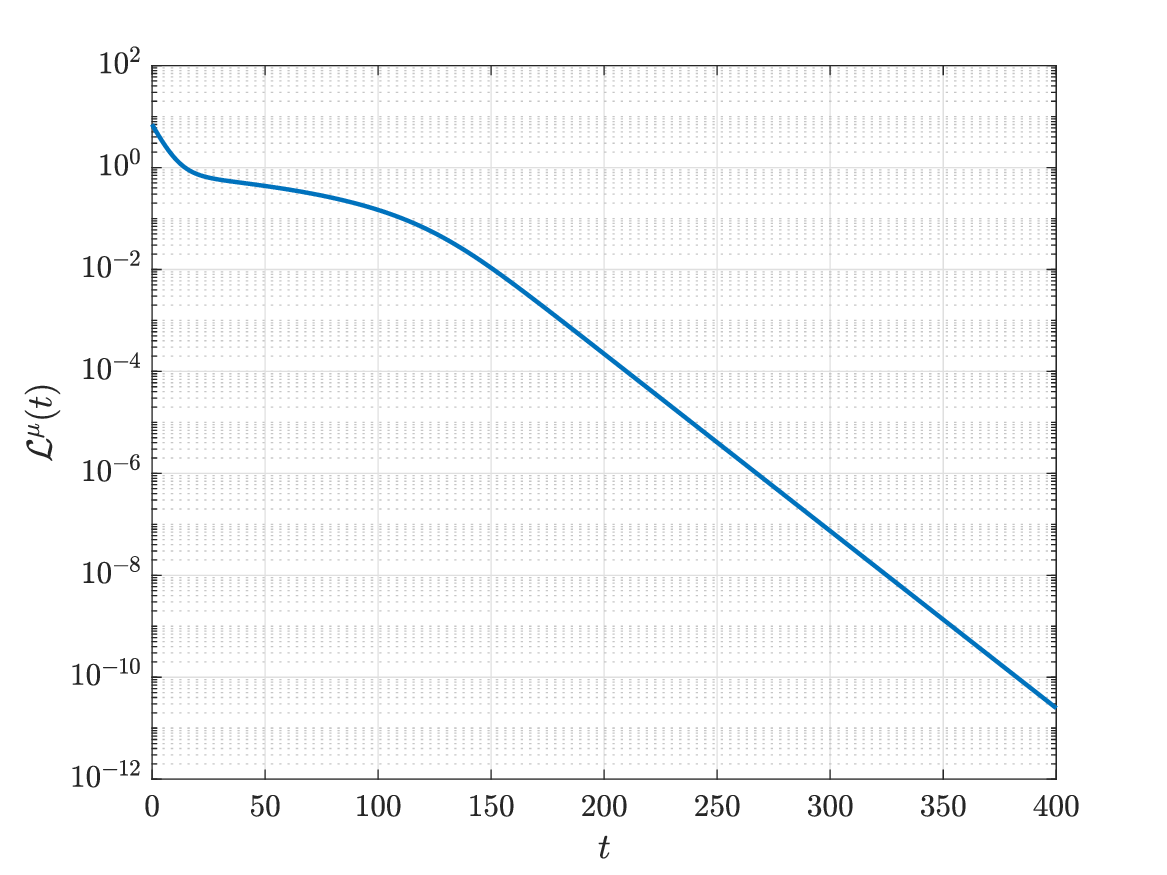}
	\caption{Effect of full control on the system. Left column: microscopic trajectories (top) and mean-field density (bottom). Right column: Lyapunov function decays.}
	\label{fig:fullcontrol}
\end{figure}

With full control applied ($b_i = 1$ for every agent $i$), as shown in Figure \ref{fig:fullcontrol}, agents successfully reach the target state. The mean-field density concentrates around $c=1$, and the Lyapunov function decays until approximately to $10^{-10}$, confirming our theoretical results.

\begin{figure}[H]
	\centering
	\includegraphics[width=0.45\linewidth]{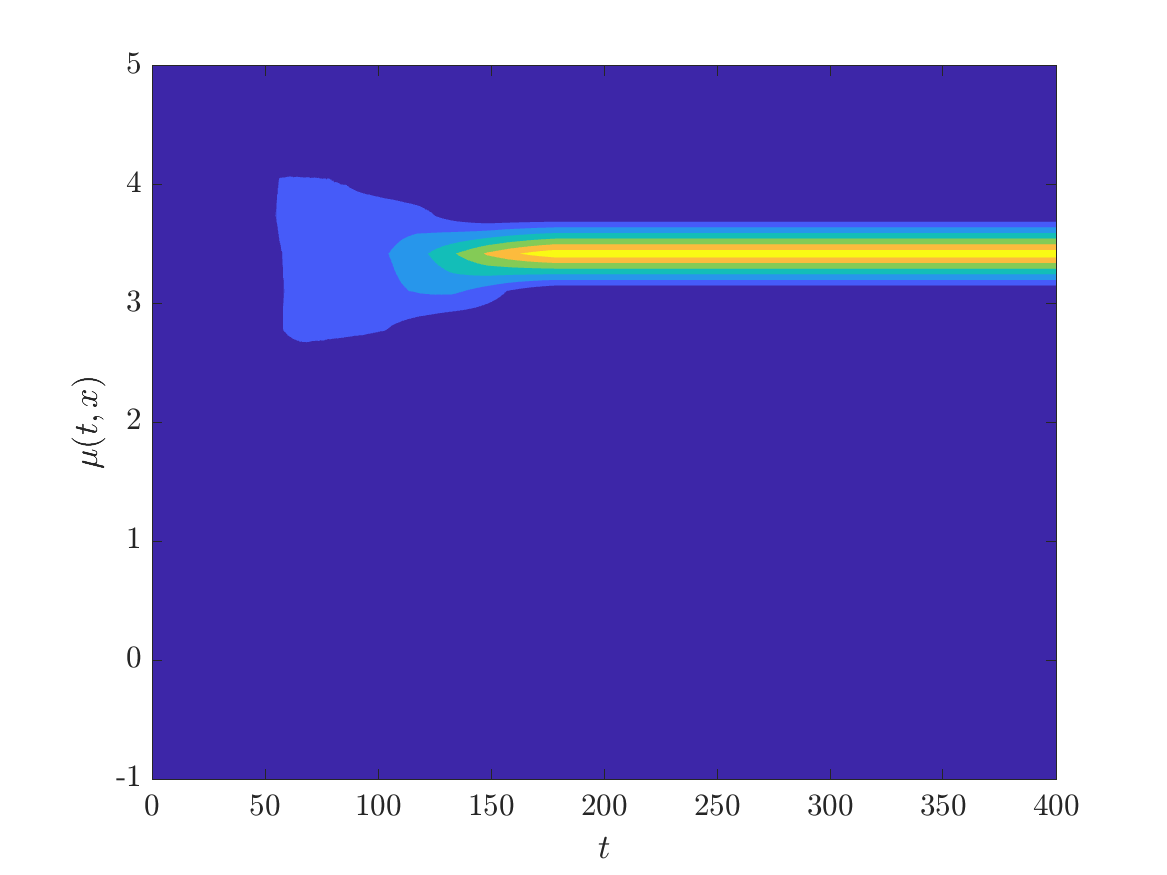}
	\includegraphics[width=0.45\linewidth]{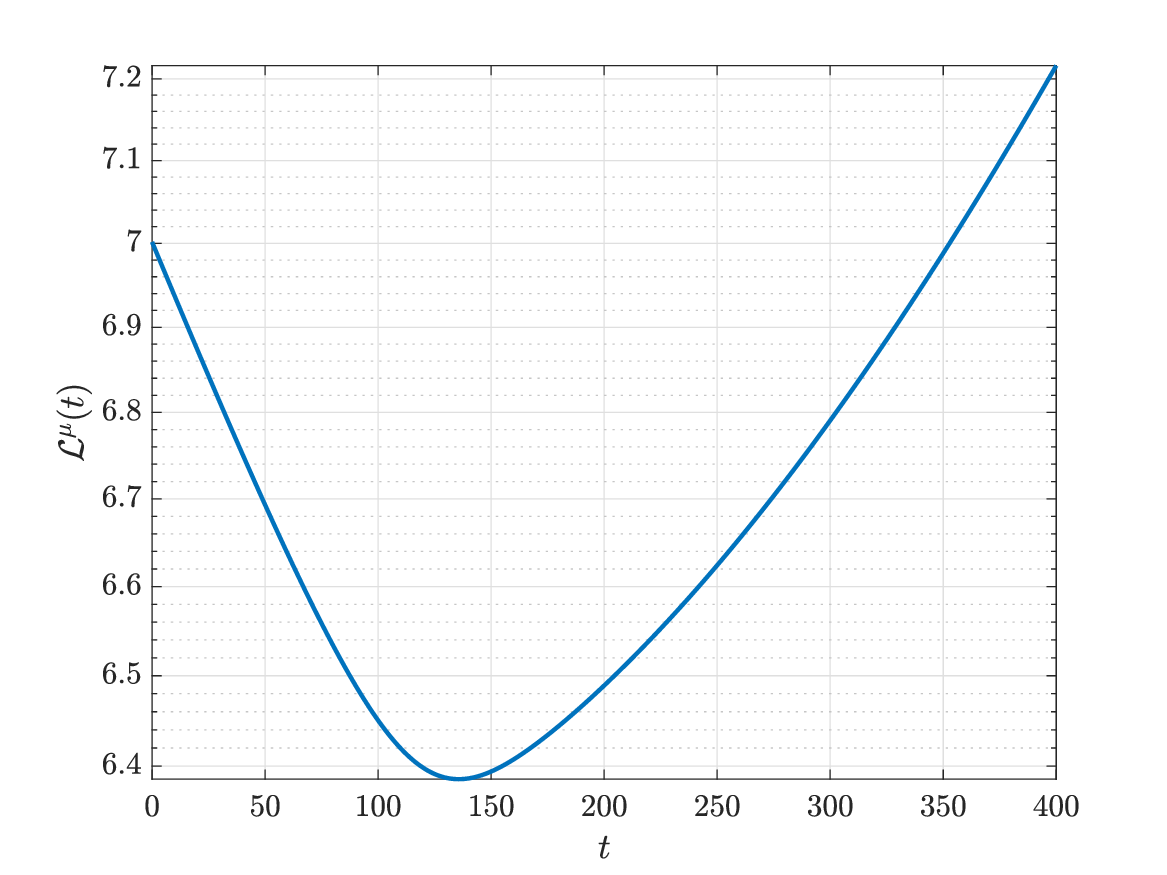}\\
	\includegraphics[width=0.45\linewidth]{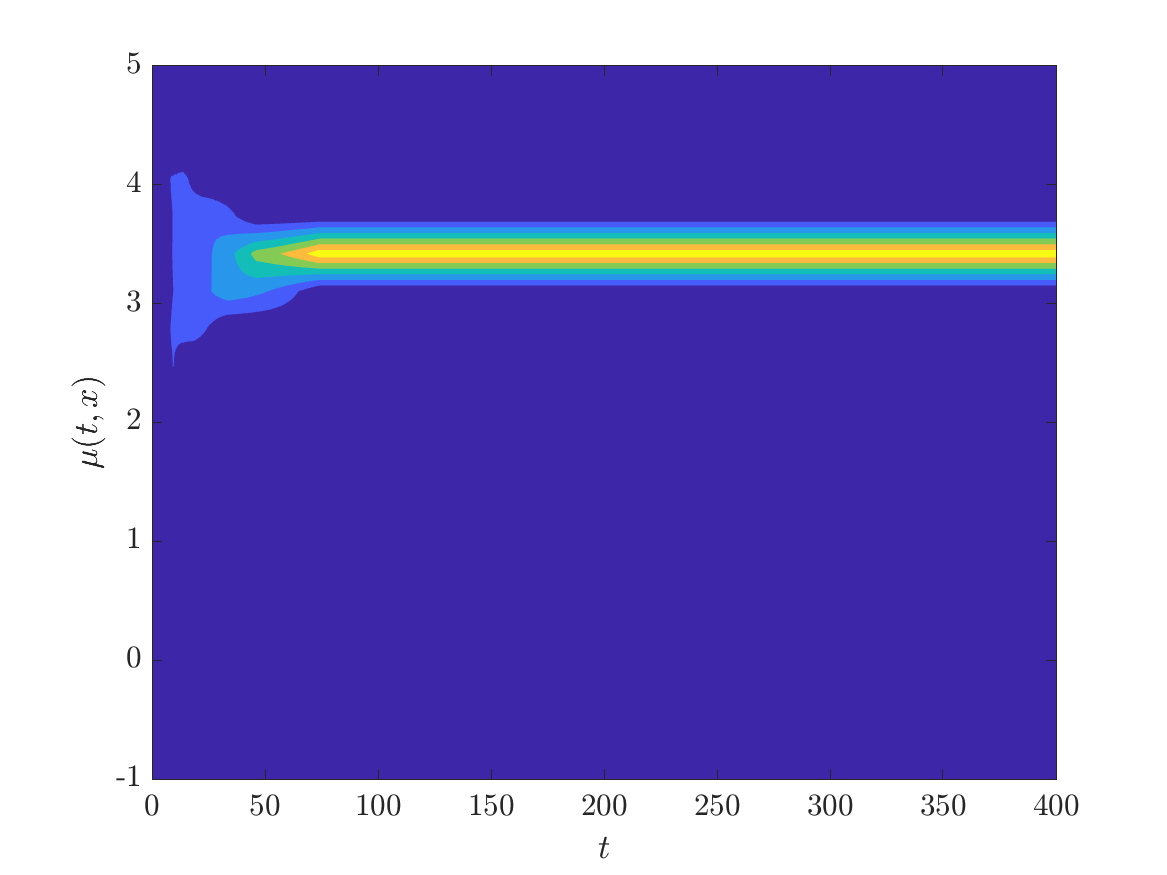}
	\includegraphics[width=0.45\linewidth]{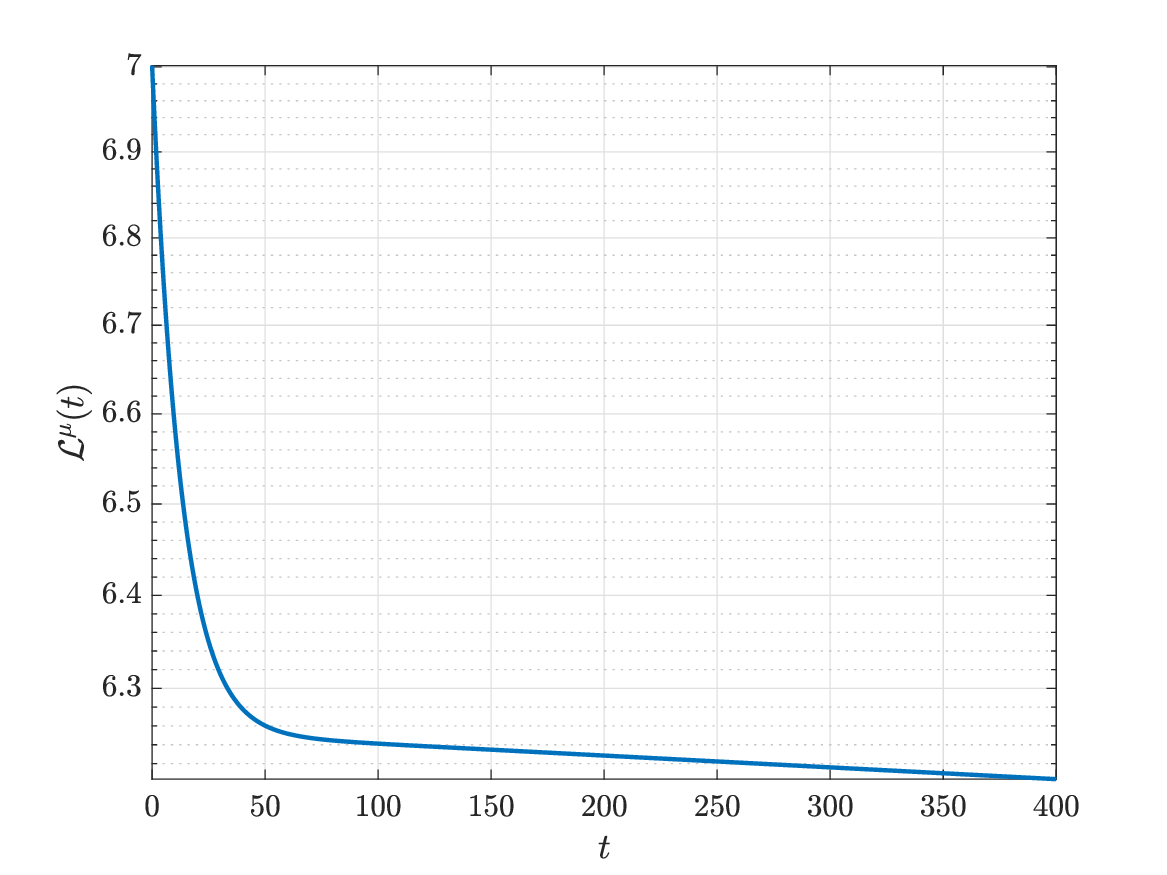}
	\caption{Sparse control dynamics. Left column: mean-field dynamics for the non-linear (top) and linear (bottom) case. Right column: Lyapunov function evolution over time.}
	\label{fig:sparsecontrol}
\end{figure}

Figure \ref{fig:sparsecontrol} presents the sparse control case ($b_i=1$ just for a random agent, all the others evolve without control action), focusing on the mean-field setting. Here, we display both the non-linear and linear dynamics, as we later aim to compare this scenario with the leader-follower model, for which we only have exponential decay results in the linear case. Notably, the agents do not reach the target, and the Lyapunov functional does not vanish, indicating incomplete stabilization.

The left column of the figure illustrates the evolution of the mean-field density under sparse control. In the non-linear case (top left), a single agent is directly controlled while the rest evolve through interaction dynamics. Since the overall population is large, the density plot does not reveal the deviation of the controlled agent, which is driven further below the target \(c\). The nonlinearity amplifies this effect, making it difficult to steer the entire system efficiently.  

The right column presents the evolution of the Lyapunov functional \(\mathcal{L}^\mu(t)\), which measures the squared distance from the target. In the non-linear case (top right), \(\mathcal{L}^\mu(t)\) initially decreases but starts increasing around \(t \approx 140\). This is due to the controlled agent moving too far below the target, while the uncontrolled agents remain clustered, leading to an overall divergence from the desired state.  

In the linear case (bottom row), the mean-field dynamics (bottom left) exhibit a more uniform evolution. However, as seen in the Lyapunov function plot (bottom right), \(\mathcal{L}^\mu(t)\) still does not reach zero, though it monotonically decreases. This suggests that sparse control is more effective in the linear setting but remains insufficient to fully drive the system to the target.

\begin{figure}[H]
	\centering
	\includegraphics[width=0.45\linewidth]{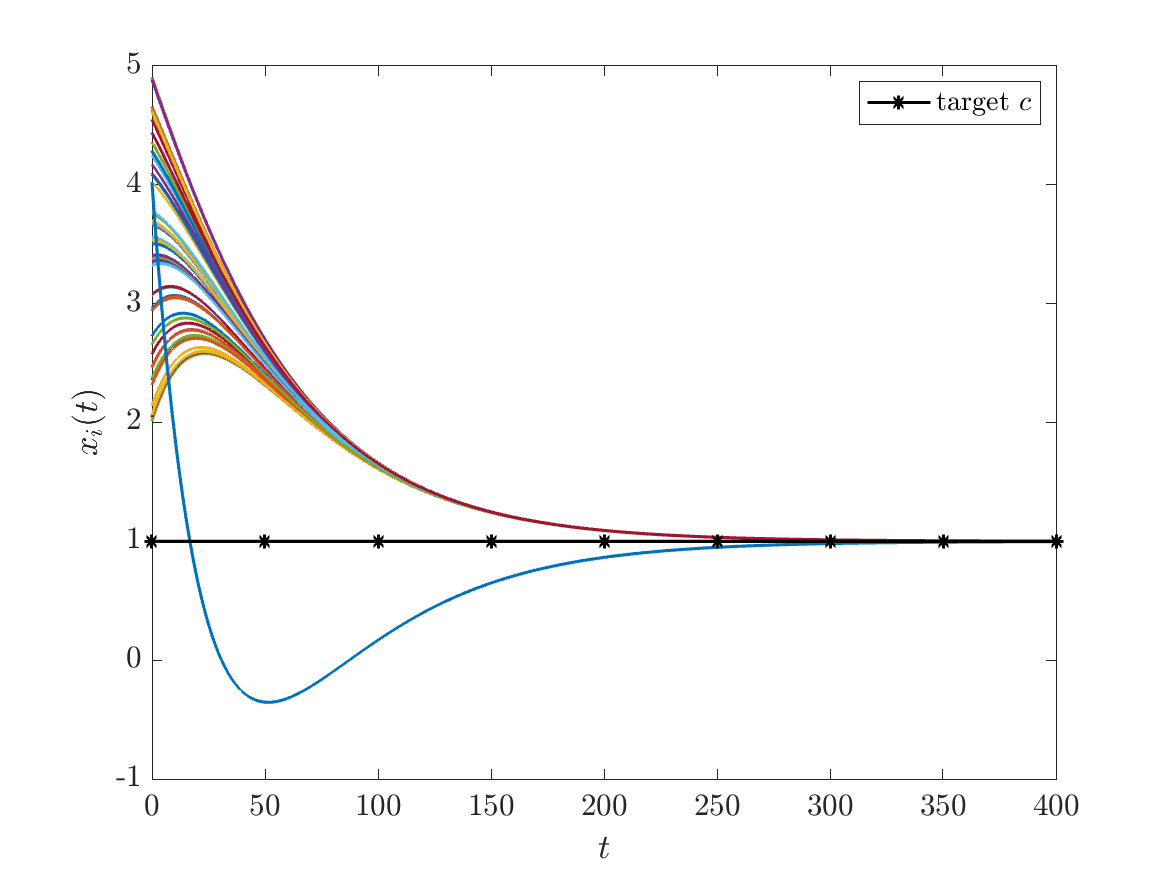}
	\includegraphics[width=0.45\linewidth]{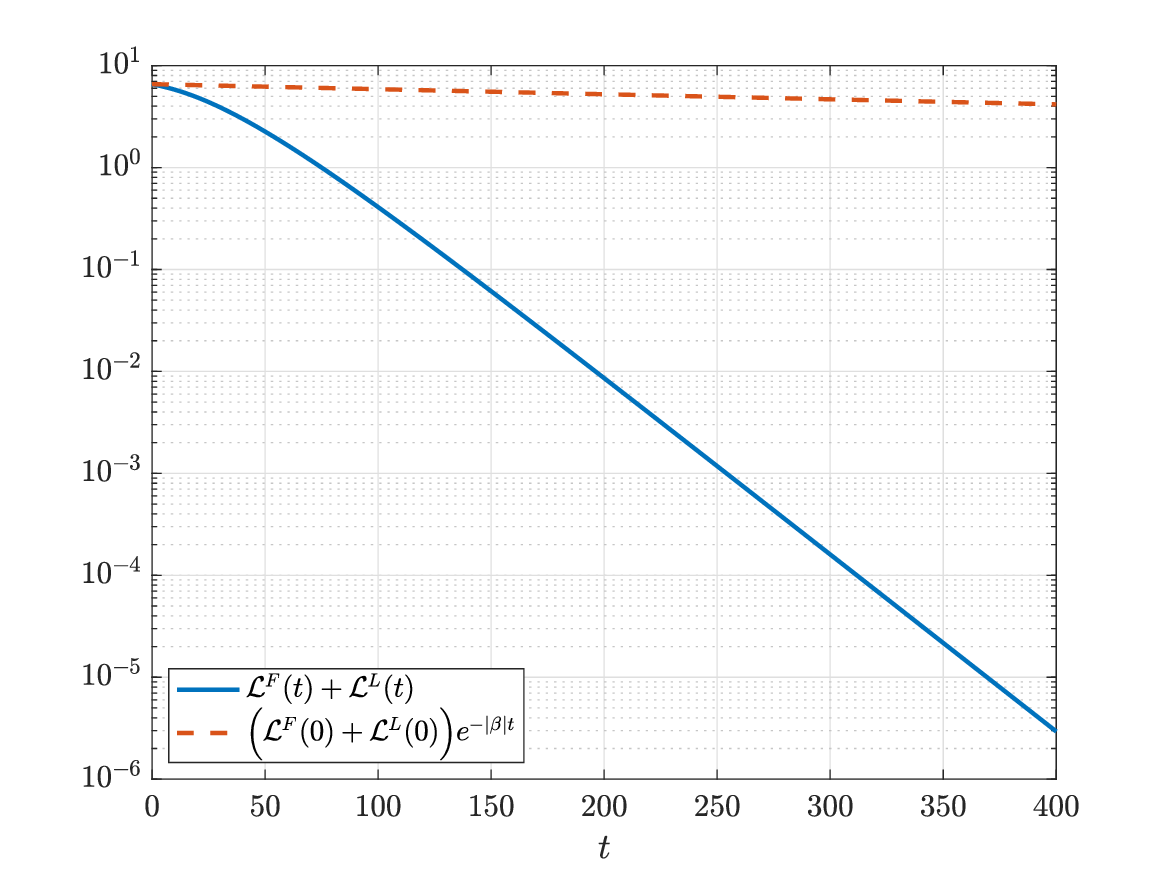}\\
	\includegraphics[width=0.45\linewidth]{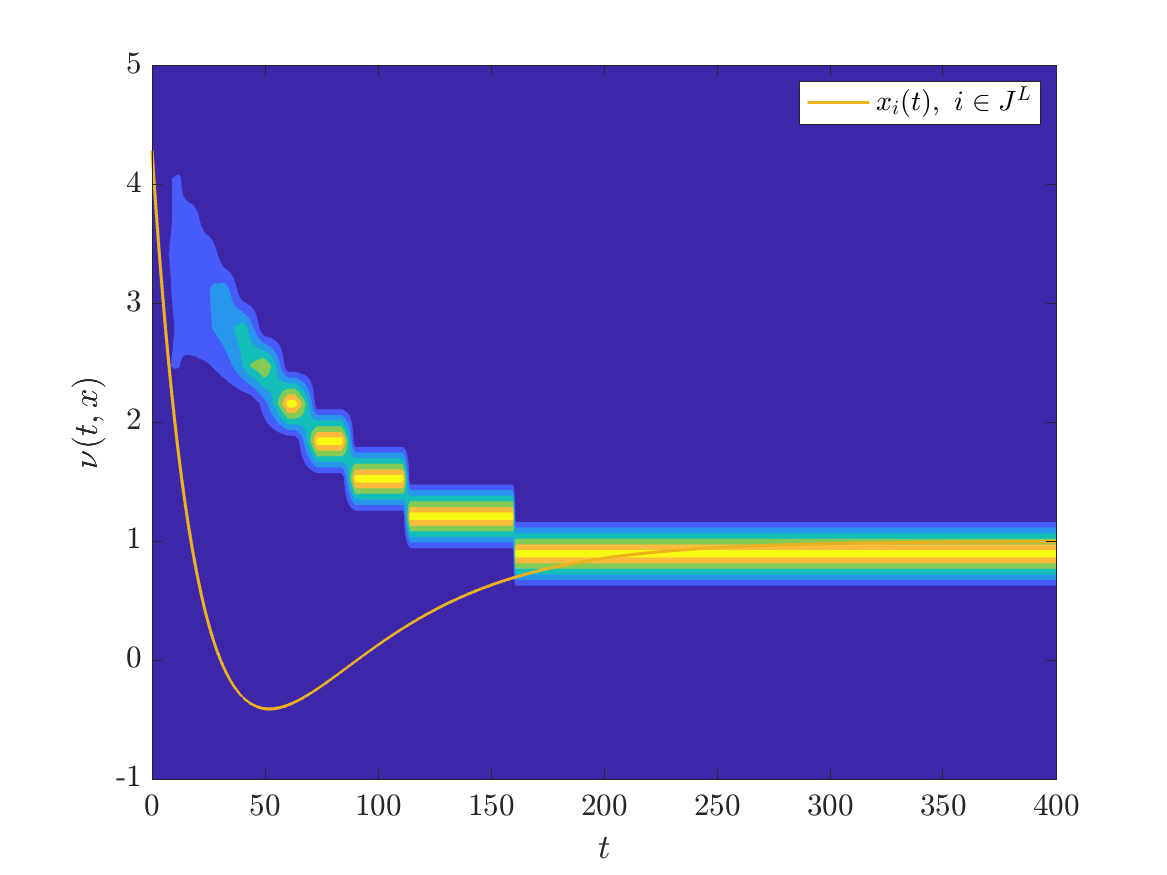}
	\includegraphics[width=0.45\linewidth]{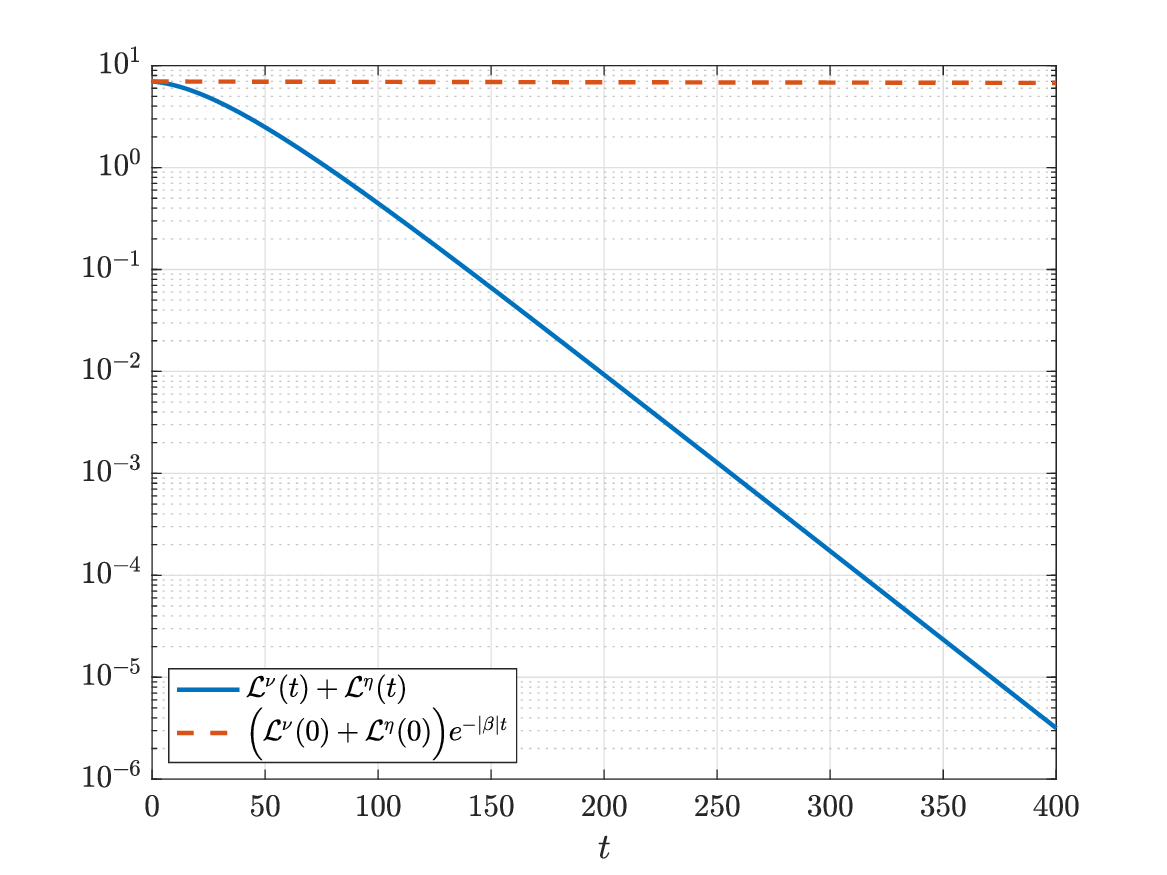}
	\caption{Leader-follower system. Left column: microscopic trajectories (top) and mean-field density (bottom). Right column: Lyapunov function decays.}
	\label{fig:leaderfollower}
\end{figure}

Figure \ref{fig:leaderfollower} illustrates the leader-follower setting, introduced through both the microscopic model \eqref{eq:leader_follower_micro} and the mean-field model \eqref{eq:leader_follower_mf}. Here, we consider the linear case where \( P(x_i, x_j) = \bar{p} \). We set $\rho^F = 0.9$, adjusting $\rho^L$ and the values of $\omega^F$ and $\omega^L$ accordingly. Even though the followers have much more weight, a single leader is still able to successfully guide all the followers to the target.
The Lyapunov function exhibits exponential decay, as predicted by the theory. To validate our theoretical results, we also display in a dashed red line the reference quantities 
\[
\Bigl(\mathcal{L}^F(0)+\mathcal{L}^L(0)\Bigr)e^{-|\beta| t}, \quad \Bigl(\mathcal{L}^\nu(0)+\mathcal{L}^\eta(0)\Bigr)e^{-|\beta| t},
\]  
with decay rate
\[
\beta = \left( 4 \bar{p} - 2 |k| \right) \frac{\sqrt{\omega^F \omega^L}}{2},
\]
demonstrating consistency with our propositions \ref{prop:micro_sparse_control} and \ref{prop:LF_MF}, given that the condition  
$|k| > 2 \bar{p}$
is satisfied in this case.

\begin{figure}[H]
	\centering
	\includegraphics[width=0.45\linewidth]{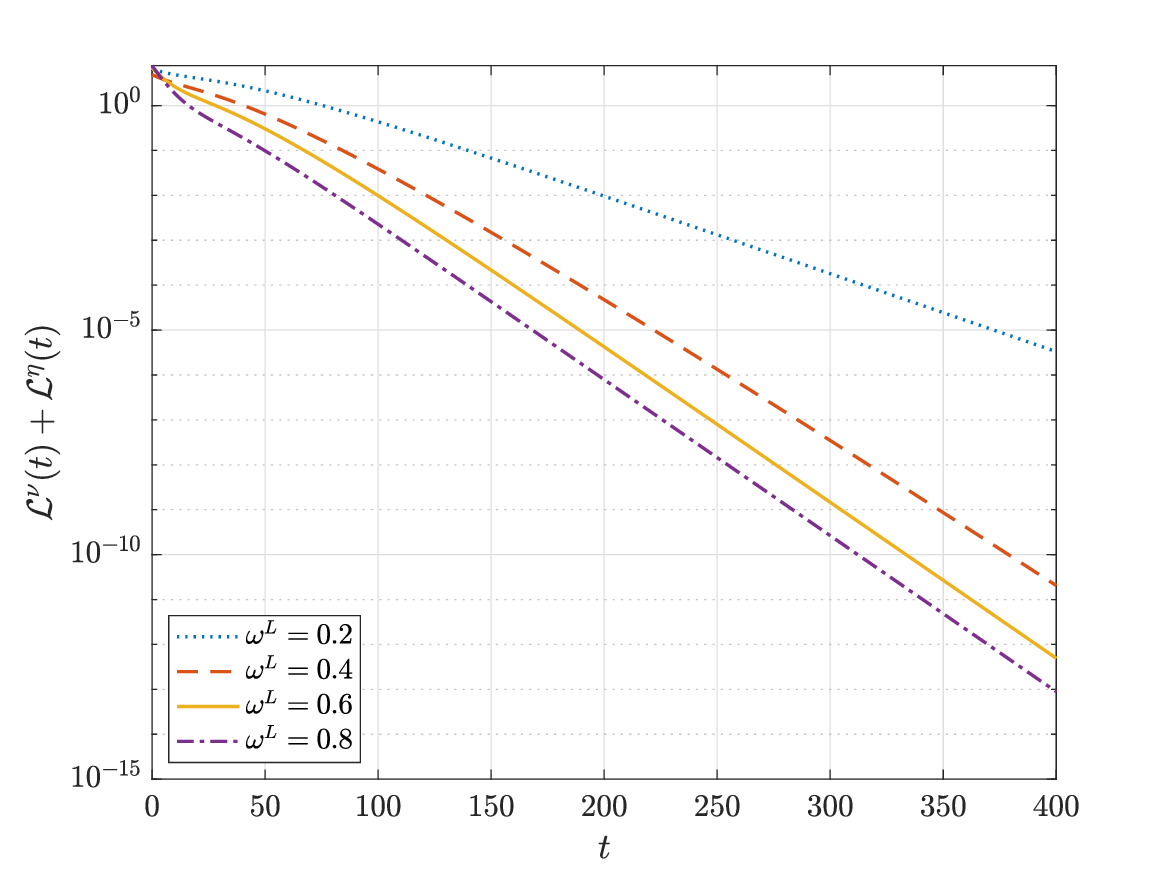}
	\caption{Lyapunov function decay for different values of $\omega^L$ under the leader-follower scenario.}
	\label{fig:lyapunov}
\end{figure}

Finally, Figure \ref{fig:lyapunov} shows a comparison of the decay of the Lyapunov functions in the leader-follower scenario, where the system consists of a single leader and a follower density. In this plot, we vary the weight assigned to the leader, denoted as $\omega^L$. The values of $\omega^L$ are set to 0.2, 0.4, 0.6, and 0.8, meaning the leader's influence increases as $\omega^L$ grows. As expected, when the leader is given more weight, the Lyapunov function decays more rapidly, indicating faster stabilization of the system.

\section{Conclusion}\label{sec:conclusion}
This work analyzed the stabilization of multi-agent systems through microscopic and mean-field models, with a focus on leader-follower dynamics and Lyapunov stability under full and sparse control. The microscopic framework established key stabilization properties, while the mean-field limit addressed large-scale systems with continuous agent densities.
Using Lyapunov-based techniques, we derived conditions ensuring decay for both control regimes, validated by eigenvalue analysis. Numerical tests confirmed these results, demonstrating the effectiveness and scalability of the proposed control strategies. These findings provide a robust foundation for further exploration of complex, large-scale multi-agent systems.

{\small \subsection*{Acknowledgments} 
The authors thank the Deutsche Forschungsgemeinschaft (DFG, German Research Foundation) for the financial support through 442047500/SFB1481 within the projects B04 (Sparsity fördernde Muster in kinetischen Hierarchien), B05 (Sparsifizierung zeitabhängiger Netzwerkflußprobleme mittels diskreter Optimierung) and B06 (Kinetische Theorie trifft algebraische Systemtheorie) and through SPP 2298 Theoretical Foundations of Deep Learning  within the Project(s) HE5386/23-1, Meanfield Theorie zur Analysis von Deep Learning Methoden (462234017). C. Segala ia a member of the Italian National Group of Scientific Calculus (Indam GNCS). C. Segala also thanks the Swiss National Science Foundation (SNSF) for the financial support through the grant number 215528, Large-scale kernel methods in financial economics.
}

\noindent \textbf{Conflict of Interest Statement}\\
All authors declare no conflicts of interest in this paper.

\bibliographystyle{siam}
\bibliography{refs2}

\end{document}